\newcommand{\C}{\mathbb{C}}
\newcommand{\ZZ}{\mathbb{Z}}
\newcommand{\QQ}{\mathbb{Q}}
\newcommand{\grif}{\hbox{Griff}}
\newcommand{\Hom}{\hbox{Hom}}
\newcommand{\gr}{\hbox{Gr}}
\newcommand{\wt}{\widetilde}
\newcommand{\ima}{\hbox{Im}}
\newcommand{\rom}{\romannumeral}
\newtheorem{convention}{Convention}
 \journalname{}
\begin{document}

\title{Surjectivity of cycle maps for singular varieties%\thanks{Grants or other notes
%about the article that should go on the front page should be
%placed here. General acknowledgments should be placed at the end of the article.}
}

%\titlerunning{Surjectivity of cycle maps}        % if too long for running head

\author{Robert Laterveer %etc.
}

%\authorrunning{Short form of author list} % if too long for running head

\institute{CNRS - IRMA, Universit\'e de Strasbourg \at
              7 rue Ren\'e Descartes \\
              67084 Strasbourg cedex\\
              France\\
              \email{laterv@math.unistra.fr}           %  \\
%             \emph{Present address:} of F. Author  %  if needed
           }

\date{}
% The correct dates will be entered by the editor

\maketitle

\begin{abstract}
A theorem of Jannsen asserts that if a smooth projective variety has injective cycle class maps, it has surjective cycle class maps. The object of this note is to present a version of Jannsen's theorem for singular quasi--projective varieties.

\keywords{Algebraic cycles \and Chow groups \and Pure motives \and Singular varieties}
% \PACS{PACS code1 \and PACS code2 \and more}
 \subclass{ 14C15 \and  14C25 \and  14C30}
\end{abstract}

\section{Introduction}
\label{intro}

Let $X$ be a smooth complex projective variety. The cycle class maps
  \[ cl^i\colon A^iX_{\QQ}\to H^{2i}(X,\QQ)\]
  from Chow groups to singular cohomology have given rise to some of the most profound and fascinating conjectures in algebraic geometry: the Hodge conjecture (concerning the image of $cl^i$), and the Bloch--Beilinson conjectures (concerning the structure of the kernel of $cl^i$).
  
Since Mumford's work \cite{M}, it is well--known that if the Chow groups $A^iX_{\QQ}$ are ``small'' (in the sense of being supported on some subvariety), then also the singular cohomology groups are small (in the sense that they are supported on some subvariety).
The following result can be seen as an extreme instance of this general principle:

\begin{theorem}[Jannsen \cite{J}]\label{theo} Suppose $X$ is a smooth projective variety, such that
  \[    cl^i\colon A^iX_{\QQ}\to H^{2i}(X,\QQ)\]
   is injective for all $i$. Then there is an isomorphism
   \[ \bigoplus_{i\ge 0} cl^i\colon\ \ \bigoplus_{i\ge 0} A^iX_{\QQ}\ \stackrel{\cong}{\to}\ \bigoplus_{j\ge 0} H^j(X,\QQ)\ .\]
  (In particular, $H^{p,q}(X,\C)=0$ for all $p\not=q$.)
\end{theorem}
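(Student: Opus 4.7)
The plan is to argue by induction on $d = \dim X$, using a Bloch--Srinivas decomposition of the diagonal as the main engine. Reducing to $X$ connected, the base case $d = 0$ is trivial, so assume $d \ge 1$ with the theorem established in all smaller dimensions. Since $X$ is connected, $H^{2d}(X,\QQ) = \QQ$ and the degree map $A^d X_\QQ \to \QQ$ is surjective, so injectivity of $cl^d$ forces $A^d X_\QQ = \QQ$: the group of zero-cycles modulo rational equivalence is as small as possible.

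Bloch--Srinivas then provides a decomposition
\[ \Delta_X \;=\; [X \times x_0] \,+\, \Gamma \quad\text{in } A^d(X \times X)_\QQ, \]
with $x_0 \in X$ a closed point and $\Gamma$ supported on $D \times X$ for some divisor $D \subset X$. After choosing a resolution $g \colon \tilde D \to D \hookrightarrow X$, the cycle $\Gamma$ lifts to $\tilde\Gamma \in A^*(\tilde D \times X)_\QQ$ with $\Gamma = (g \times \mathrm{id}_X)_* \tilde\Gamma$. A direct calculation shows $[X \times x_0]_*$ is the projector onto $H^{2d}(X,\QQ)$ and annihilates $A^i X_\QQ$ for $i < d$, so $\Gamma_*$ acts as the identity on $H^j(X,\QQ)$ for $0 \le j < 2d$ and on $A^i X_\QQ$ for $0 \le i < d$. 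By the projection formula one has $\Gamma_* = \tilde\Gamma_* \circ g^*$; in particular $g^*$ is split injective and $\tilde\Gamma_*$ split surjective on these groups.

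The induction hypothesis applied to $\tilde D$ (of dimension $d-1$) gives $H^j(\tilde D,\QQ) = 0$ for odd $j$ and $H^{2i}(\tilde D,\QQ) = A^i(\tilde D)_\QQ$ for each $i$. Since $\tilde\Gamma$ is an algebraic correspondence, $\tilde\Gamma_*\colon H^*(\tilde D, \QQ) \to H^*(X,\QQ)$ carries algebraic classes to algebraic classes. In the range $0 < j < 2d$, this forces $H^j(X,\QQ) = 0$ for $j$ odd, and $H^{2i}(X,\QQ)$ to be generated by classes of algebraic cycles; the boundary cases $H^0(X,\QQ)$ and $H^{2d}(X,\QQ)$ are immediate. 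Combining with the hypothesised injectivity of $\bigoplus cl^i$ yields the isomorphism claimed in the theorem.

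The main obstacle is verifying the induction hypothesis on $\tilde D$: injectivity of $cl^*$ on $X$ does not transfer to $\tilde D$ in any obvious way --- at best one controls the direct summand $g^*(A^i X_\QQ) \subseteq A^i(\tilde D)_\QQ$. The likely remedy is to replace the elementary Bloch--Srinivas decomposition above by the refined Bloch--Srinivas--Paranjape decomposition $\Delta_X = \sum_{i=0}^d \pi_i$, with each $\pi_i$ supported on a product of varieties of controlled dimension; this refinement is available because all $A^i X_\QQ$ are finite-dimensional (embedding as they do in finite-dimensional singular cohomology). One would then induct on the stronger motivic statement that the Chow motive of $X$ is a finite sum of Tate motives, which propagates cleanly through the correspondence-theoretic pieces of the refined decomposition and sidesteps the need to check the theorem's precise hypothesis on an auxiliary resolution.
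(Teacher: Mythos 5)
Your first three paragraphs do not prove the theorem, and you say so yourself: the inductive hypothesis cannot be verified on $\widetilde{D}$, since injectivity of the cycle class maps on $X$ controls only the summand $g^*A^iX_{\QQ}\subseteq A^i(\widetilde{D})_{\QQ}$. Everything therefore rests on your closing paragraph, which is a plan rather than an argument, and as stated the plan does not close the gap: the pieces of the refined decomposition are supported on products $V_j\times W_j$ of subvarieties of $X$ about whose Chow groups (or motives) you again know nothing, so ``inducting on the stronger motivic statement'' through those pieces is in no better shape than the induction you already rejected --- ``propagates cleanly'' is precisely the step that needs proof. Moreover the motivic strengthening (that the Chow motive of $X$ is a sum of Tate twists) is the Kimura--Vial theorem quoted in proposition \ref{equiv}; it is strictly harder than the cohomological statement you are asked to prove, and it is not needed.

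The missing observation is that once the refined decomposition is in hand, no induction of any kind is required. Injectivity of $cl^i$ into the finite-dimensional space $H^{2i}(X,\QQ)$ forces each $A_iX_{\QQ}$ to be a finite-dimensional $\QQ$-vector space, hence generated by finitely many $i$-dimensional subvarieties, i.e. $\hbox{Niveau}(A_iX_{\QQ})\le 0$ for all $i$; lemma \ref{diag} (with $r=0$ and empty boundary) then gives, on $X$ itself, a decomposition $\Delta=\Delta_0+\cdots+\Delta_n$ with $\Delta_j$ supported on $V_j\times W_j$, $\dim V_j=j$, $\dim W_j=n-j$. One now simply lets this act on $H^\ast(X)$, as in the proofs of propositions \ref{equiv} and \ref{smoothcase}: the action of $\Delta_j$ on a class of Hodge type $(p,q)$ factors through the corresponding Hodge groups of resolutions of $V_j$ and of $W_j$, and for $p\not=q$ one of these two groups vanishes for dimension reasons, so $H^{p,q}(X)=0$ for $p\not=q$; for a class in $H^{2i}(X,\QQ)$ the single surviving piece has image contained in the span of the pushed-forward fundamental classes of the components of the relevant $W_j$, hence in $\ima\,cl^i$. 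Combined with the hypothesised injectivity this yields the isomorphism directly, with no resolution $\widetilde{D}$ and no descent of hypotheses. Your instinct to reach for the Bloch--Srinivas--Paranjape decomposition was right; the error was to keep the dimension induction wrapped around it.
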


%\begin{theorem}[Esnault--Levine \cite{EL}] Suppose, for a given $X$ smooth projective, the cycle class maps into Deligne cohomology
 % \[ cl^i_{\DD}\colon A^iX_{\QQ}\to H^{2i}_{\DD}(X,\QQ(i))\]
 % are injective for all $i$. Then they are surjective.
 % \end{theorem}
  This result can be proven using the Bloch--Srinivas method of decomposing the diagonal, and the formalism of correspondences \cite{BS}, \cite{Vo}.

 In this note, we look at the following question:
 
 \begin{question}\label{thequestion} If $X$ is only quasi--projective, and/or singular, in what sense is theorem \ref{theo} still true ?
 \end{question}
 
 This question has been treated by Lewis \cite[Corollary 0.3]{L2}, assuming a generalized version of the generalized Hodge conjecture holds. In this note, by contrast, we wanted to see how far we could get unconditionally. Our main result gives a version of theorem \ref{theo}, provided the singular locus of $X$ is not too large:
 
 \begin{theorem}
 %[proposition \ref{jannsen0}] 
 Let $X$ be a quasi--projective variety of dimension $n$, and suppose there exists a compactification of $X$ with singular locus of dimension $\le {n+1\over 3}$. Suppose all cycle class maps are injective. Then the $cl_i$ induce an isomorphism
  \[\bigoplus_{i\ge 0}\colon\ \  \bigoplus_{i\ge 0} A_iX_{\QQ}\ \stackrel{\cong}{\to}\ \bigoplus_{\ell\ge 0} W_{-\ell} H_{\ell}(X,\QQ)\ .\]
  That is,
    \[ W_{-\ell} H_\ell X=\begin{cases} 0&\hbox{if\ $\ell$\ is\ odd;}\\
                                             \ima \, cl_i&\hbox{if\ $\ell=2i$.}\\
                                             \end{cases}\]  
   \end{theorem}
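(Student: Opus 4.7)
The plan is to reduce the statement to Jannsen's Theorem~\ref{theo} applied to a smooth projective resolution of the compactification. Let $\bar{X}$ be the compactification with singular locus $Z$ of dimension $s \le (n+1)/3$, choose a resolution $\pi\colon \tilde X\to \bar X$ with $\tilde X$ smooth projective and exceptional set $E=\pi^{-1}(Z)$, and write $D=\bar X\setminus X$. Chow groups and weight--filtered Borel--Moore homology of $X$, $\bar X$, $\tilde X$, $Z$, $E$, $D$ are connected by localization long exact sequences compatible with cycle class maps; these ladders are the basic diagrammatic input of the argument.

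The first key step is to use them to propagate the injectivity hypothesis on $X$ to an injectivity hypothesis on all cycle class maps of $\tilde X$. Since $\pi$ restricts to an isomorphism off $E$, any ``new'' Chow classes on $\tilde X$ are supported on $E\cup \pi^{-1}(D)$, and a diagram chase reduces the injectivity of $cl^i_{\tilde X}$ to controlling cycle classes supported on $E$ and on $Z$. The bound $s\le (n+1)/3$ enters as a dimension count: it constrains $\dim Z$ --- and, through the structure of $E$ as an exceptional divisor over $Z$, also the relevant part of $E$ --- so tightly that in every codimension $i$ the potential kernel classes are forced to vanish for numerical reasons. Once all $cl^i_{\tilde X}$ are injective, Theorem~\ref{theo} yields that $\bigoplus cl^i$ is an isomorphism $\bigoplus A^i(\tilde X)_\QQ \stackrel{\cong}{\to} H^*(\tilde X,\QQ)$ with $H^{p,q}(\tilde X)=0$ for $p\ne q$.

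Finally I transfer the conclusion back to $X$. For the proper variety $\bar X$, the lowest weight piece $W_{-\ell}H_\ell(\bar X,\QQ)$ is the image of $\pi_*\colon H_\ell(\tilde X,\QQ)\to H_\ell(\bar X,\QQ)$, and by the previous step this image is generated by algebraic cycles; in particular it vanishes for odd $\ell$. The weight--filtered localization sequence for the pair $(\bar X,D)$ then transports the conclusion from $\bar X$ to $X$, yielding both the vanishing $W_{-\ell}H_\ell(X,\QQ)=0$ for odd $\ell$ and the surjectivity $W_{-2i}H_{2i}(X,\QQ)=\ima\, cl_i$. The main obstacle will be the numerical step that propagates injectivity to $\tilde X$ in \emph{every} codimension: pinning down why the threshold $s\le(n+1)/3$ is precisely what makes the dimension count close up is where the delicate part of the proof must live.
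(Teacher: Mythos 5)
Your reduction to Jannsen's theorem on a resolution $\tilde X$ of the compactification breaks down at its central step: the propagation of injectivity from $X$ to $\tilde X$ is not available, and the mechanism you propose for it cannot work. The exceptional set $E=\pi^{-1}(Z)$ is a divisor in $\tilde X$, hence of dimension $n-1$ no matter how small $Z$ is; its fibres over $Z$ can be $(n-1)$--dimensional varieties with arbitrarily large Chow groups (think of $\bar X$ a cone over a surface of general type: $Z$ is a point, but $E$ is essentially that surface, whose $A_0^{hom}$ is enormous and survives in $A_0(\tilde X)_{\QQ}$ while collapsing in $A_0(\bar X)_{\QQ}$). So the bound on $\dim Z$ does not constrain the cycle classes supported on $E$, and the descent sequence $A_\ast(E)\to A_\ast(\tilde X)\oplus A_\ast(Z)\to A_\ast(\bar X)\to 0$ presents $A_\ast(\bar X)$ as a \emph{quotient} of $A_\ast(\tilde X)$, so no diagram chase bounds the kernel of $cl^i_{\tilde X}$ by data on $X$; the classes supported on $\pi^{-1}(D)$ pose the same problem. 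Indeed, were your step correct it would in particular show that Chow--triviality passes from a quasi--projective variety to a smooth compactification, which the paper flags (remark \ref{voisin}) as an open problem tied to Voisin's standard conjecture. Your guess for where $(n+1)/3$ enters is accordingly not the right mechanism.

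The paper never resolves $\bar X$; it works on the singular compactification directly. The Bloch--Srinivas decomposition of the diagonal (lemma \ref{diag}) is carried out on $\bar X$ itself, and each piece acts on cohomology by cap product, $C_\ast(\bar a)=(\pi_2)_\ast(\pi_1^\ast\bar a\cap C)$, with $\Delta_\ast(\bar a)=\bar a\cap[\bar X]$. The genuinely singular difficulty is that capping with $[\bar X]$ need not surject onto $W_{-\ell}H_\ell(\bar X,\QQ)$; this is where intersection homology enters (lemma \ref{durf}, via Durfee and Weber), giving surjectivity of $\gr^W_jH^j(\bar X,\QQ)\to W_{j-2n}H_{2n-j}(\bar X,\QQ)$ only for $j\ge n+s$, hence control of $W_{-\ell}H_\ell$ only for $\ell\le n-s$. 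The complementary range $\ell\ge 2s$ is handled by a different argument, via the smooth locus $U=X\setminus S$, proposition \ref{smoothcase} and the localization sequence. The hypothesis $s\le(n+1)/3$ is precisely the condition $2s\le n-s+1$ ensuring $[0,n-s]\cup[2s,2n]$ covers all degrees. To repair your argument you would need to replace the injectivity--propagation step by an argument that uses only the diagonal of $\bar X$, which is what the paper's route accomplishes.
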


To prove this result, we adapt Jannsen's original method (i.e. the decomposition of the diagonal, plus the formalism of correspondences) to the singular and quasi--projective case. The decomposition of the diagonal goes through unchanged, except that in the quasi--projective case the boundary of a compactification appears in the decomposition (this is lemma \ref{diag}). As to correspondences: suppose $X$ is projective (not necessarily smooth) of dimension $n$. Then a correspondence, i.e. a cycle $C\in A_n(X\times X)_{\QQ}$, induces an action
  \[ C_\ast\colon H^i(X,\QQ)\ \to\ H_{2n-i}(X,\QQ)\ \]
  in a natural way (using the cap--product).
 Since $\Delta_\ast$ is just the canonical map (capping with the class of $X$), and since we can control the image of this canonical map in favourable cases (lemma \ref{durf}), we can conclude by looking at the action of the components occuring in the decomposition of $\Delta$.
 
We raise several questions in the course of this note. Indeed, in several respects even the smooth quasi--projective case is far from being as clear--cut and well--understood as the smooth projective case; a relation with ``Voisin's standard conjecture'' \cite{V0} naturally appears (remark \ref{voisin}).  

\begin{convention} In this note, the word {\sl variety\/} refers to a quasi--projective algebraic variety over $\C$.
\end{convention}

\section{The Bloch--Srinivas argument}
\label{sec:1}

\begin{definition} We will use $A_i$ to denote the Chow group of $i$--dimensional cycles, and $A^i$ to denote the Fulton--MacPherson operational Chow cohomology \cite{F}. 
By construction, $A^\ast$ acts on Chow groups $A_\ast$; in particular, for any projective $X$ there are natural maps
  \[  \begin{split} A^iX\ &\to\ \Hom\bigl( A_iX,\ZZ\bigr)\ ,\\
        b&\mapsto\ \hbox{deg}(b\cap -)\ .
        \end{split}\]

 We recall \cite{T} there are functorial cycle class maps
   \[  cl^i\colon A^iX\to \gr^W_{2i} H^{2i}(X,\QQ)\ ,\] 
   where $W$ is Deligne's weight filtration.                  
\end{definition}

\begin{definition} We will use the notation $A^i_c$ for ``compactly supported operational Chow cohomology''. This is defined as follows: for any quasi--projective $X$, let $X\subset\bar{X}$ be a compactification with boundary $D$. Define $A^i_cX$ by the exact sequence
  \[ 0\to A^i_cX\to A^i\bar{X}\to A^iD\ .\]
  This is independent of choice of $\bar{X}$ \cite{GS} (actually, $A^i_cX$ is what is denoted $R^0A^i X$ in \cite{GS}). Moreover $A^i_c$ is a contravariant functor for arbitrary morphisms \cite{GS}. There are natural maps
    \[ A^i_cX_{\QQ}\ \to\ \Hom\bigl( A_iX_{\QQ}, \QQ\bigr)\ ,\]
    defined by the above exact sequence. There are also functorial cycle class maps
      \[ cl^i\colon\ A^i_cX\ \to\ \gr^W_{2i} H^{2i}_c(X,\QQ)\ ,\]
      again defined by the above exact sequence.
\end{definition}  

%\begin{lemma} $A^iX_{\QQ}$ can be defined as follows: an element of $A^iX_{\QQ}$ is a collection of homomorphisms
 % \[A_j(X^\prime)_{\QQ}\ \to\ A_{j-i}(X^\prime)_{\QQ}\ ,\]
%  for all $X^\prime\to X$, compatible with proper push--forward, flat pull--back and intersections with divisors.
%\end{lemma}

%\begin{proof} Provisionally, let's denote the newly defined theory by $QA^iX$.
%There are obvious maps $A^iX_{\QQ}\to QA^iX$; we need to prove
 % \[A^iX_{\QQ}\stackrel{\cong}{\rightarrow} QA^iX_{\QQ}\ .\]
%This is OK for $X$ smooth of dimension $n$: both theories are canonically isomorphic to $A_{n-i}X_{\QQ}$. For general $X$, let $S$ denote the singular locus and let $\wt{X}\to X$ denote a resolution of singularities with exceptional divisor $E$. From Kimura's work \cite{Kim}, we get exact sequences
 % \[\begin{array}[c]{cccccc}
      %    0\to &A^iX_{\QQ}&\to& A^i(\wt{X}\coprod S)_{\QQ}& \to& A^iE_{\QQ}\\
              %    &\downarrow && \downarrow && \downarrow \\
       % 0\to &QA^iX&\to& QA^i(\wt{X}\coprod S)& \to& QA^iE \end{array}\]
% and we are done.       
%\end{proof}

\begin{definition} Let $X$ be a quasi--projective variety. Following Voisin \cite{V0}, \cite{Vo}, we say that $X$ has trivial Chow groups if the cycle class maps
  \[  cl_i\colon A_iX_{\QQ}\to H_{2i}(X,\QQ)\]
  are injective for all $i$.
  \end{definition}
  
  \begin{definition} Let $X$ be a quasi--projective variety. We say that 
    \[ \hbox{Niveau}\bigl( A_iX_{\QQ}\bigr)\le r\]
    if there exists a closed (i+r)--dimensional subvariety $Y\subset X$ such that $A_i(X\setminus Y)_{\QQ}=0$.
    \end{definition}

The key to the whole argument is the following decomposition lemma. This is the Bloch--Srinivas argument \cite{BS}; in his book, Bloch attributes this argument to Colliot--Th\'el\`ene \cite[appendix to lecture 1]{B}.

\begin{lemma}\label{diag} Let $\bar{X}$ be a projective variety of dimension $n$, and $X\subset\bar{X}$ the complement of a closed subvariety $D$. Suppose
  \[\hbox{Niveau}\bigl( A_iX_{\QQ}\bigr)\le r \ \ \ \ \hbox{for\ all\ $i$\ .}\]
  Then there is a decomposition of the diagonal
  \[ \Delta=\Delta_0+\Delta_1+\cdots+\Delta_n+\Gamma\ \ \in A_n(\bar{X}\times\bar{X})_{\QQ}\ ,\]
  where $\Delta_j$ is supported on $V_j\times W_j$, and $V_j\subset\bar{X}$ is of dimension $j+r$, $W_j\subset\bar{X}$ is of dimension $n-j$, and $\Gamma$ is supported on $\bar{X}\times D$.
\end{lemma}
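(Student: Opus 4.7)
The plan is to adapt the classical Bloch--Srinivas decomposition of the diagonal (\cite{BS}, \cite[appendix to lecture 1]{B}, \cite{Vo}) to the quasi--projective singular setting. The only new feature relative to the smooth projective case is that the niveau hypothesis is formulated on the open part $X$, so the argument must be set up inside $A_n(\bar X\times X)_\QQ$, with the boundary contribution appearing as the term $\Gamma$ supported on $\bar X\times D$.

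\smallskip

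\noindent I would begin by invoking the localization exact sequence
\[
A_n(\bar X\times D)_\QQ\ \longrightarrow\ A_n(\bar X\times\bar X)_\QQ\ \longrightarrow\ A_n(\bar X\times X)_\QQ\ \longrightarrow\ 0
\]
to reduce matters to producing a decomposition $\Delta|_{\bar X\times X}=\sum_j \Delta_j|_{\bar X\times X}$ inside $A_n(\bar X\times X)_\QQ$, with each $\Delta_j\in A_n(\bar X\times\bar X)_\QQ$ supported on a product $V_j\times W_j$ of the prescribed shape; any such identity lifts to $\Delta=\sum_j\Delta_j+\Gamma$ in $A_n(\bar X\times\bar X)_\QQ$ with $\Gamma$ automatically supported on $\bar X\times D$.

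\smallskip

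\noindent Next I would peel off the pieces $\Delta_j$ by induction on $j$. Suppose at stage $j$ the current remainder $R_{j-1}$ is supported on $\bar X\times(W\cap X)$ for some closed $W\subset\bar X$ of dimension $n-j$ (take $W=\bar X$ at $j=0$). Restricting $R_{j-1}$ to the generic fibre of the second projection $\bar X\times(W\cap X)\to W\cap X$ produces, by dimension count, a class in $A_j(\bar X_{k(W\cap X)})_\QQ$. By the niveau hypothesis on $A_jX_\QQ$ (preserved under base change to the extension $k(W\cap X)/\C$ by the standard specialization argument, cf.\ \cite{BS}, \cite{Vo}), this class is rationally equivalent, modulo classes supported on $D_{k(W\cap X)}$, to one supported on $(Y_j)_{k(W\cap X)}$, where $Y_j\subset X$ witnesses niveau and has dimension $j+r$. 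Spreading out the resulting rational equivalence over $W\cap X$ and closing up in $\bar X\times\bar X$ yields a cycle $\Delta_j$ supported on $V_j\times W_j$, with $V_j=\overline{Y_j}$ of dimension $j+r$ and $W_j=W$ of dimension $n-j$, while the new remainder $R_j$ has vanishing generic fibre over $W\cap X$. A further application of localization shows $R_j$ is supported on $\bar X\times(W'\cap X)$ for some $W'\subsetneq W$ of dimension $\le n-j-1$, setting up the next stage; after at most $n+1$ iterations the remainder vanishes.

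\smallskip

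\noindent The main delicate point is the careful organization of the boundary bookkeeping: the passage between the generic-fibre class in $A_j(X_{k(W\cap X)})_\QQ$ and its lift to $A_j(\bar X_{k(W\cap X)})_\QQ$ leaves an ambiguity of cycles supported on $D_{k(W\cap X)}$, and one must arrange matters so that, after spreading out and closing up, all such residual contributions end up supported on $\bar X\times D$ rather than on $D\times\bar X$, so that they can be absorbed into the single term $\Gamma$. Once this is managed, the remaining cycle-theoretic machinery (localization, base change of Chow groups, spreading out of rational equivalences) is standard and remains valid on arbitrary varieties, so the classical Bloch--Srinivas argument adapts to the quasi--projective singular setting essentially unchanged.
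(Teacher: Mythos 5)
Your overall strategy (iterated Bloch--Srinivas decomposition via generic--fibre restriction and spreading out) is the same as the paper's, but there is a genuine gap at exactly the point you yourself flag as ``delicate'', and the resolution you gesture at does not exist in the setup you chose. At each stage your generic--fibre class lives in $A_j(\bar X_{k(W\cap X)})_{\QQ}$, i.e.\ on the \emph{full} first factor $\bar X$, whereas the niveau hypothesis concerns $A_jX_{\QQ}$. To use it you must pass to $A_j(X_{k(W\cap X)})_{\QQ}$ and lift back through the localization sequence $A_j(D_{k(W\cap X)})_\QQ\to A_j(\bar X_{k(W\cap X)})_\QQ\to A_j(X_{k(W\cap X)})_\QQ\to 0$; the resulting residual class is supported on $D_{k(W\cap X)}$ \emph{in the first factor}, so after spreading out over $W\cap X$ and closing up it is a cycle supported on $D\times\bar X$, not on $\bar X\times D$. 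Your initial reduction to $A_n(\bar X\times X)_{\QQ}$ only kills cycles supported on $\bar X\times D$, so these terms survive, and there is no way to ``arrange matters'' so that a cycle supported on $D\times\bar X$ becomes one supported on $\bar X\times D$: the two factors play asymmetric roles in your induction. As written, the remainder accumulates uncontrolled terms on the wrong side and the induction does not close to the stated decomposition.

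The paper's proof avoids this by setting up the induction with the factors the other way around: one restricts the \emph{first} factor to $X$ (so the generic--fibre class lies directly in $A_\bullet(X_{k(\cdot)})_{\QQ}$, where the niveau hypothesis applies with no lifting and hence no first--factor boundary ambiguity) and takes generic points of subvarieties of the \emph{second} factor. Then \emph{all} boundary contributions accumulate on $D\times\bar X$, and at the very end one takes the transpose of the whole identity: since $\Delta$ is symmetric, ${}^t\Delta=\Delta$, transposition carries $D\times\bar X$ to $\bar X\times D$ and $V_j\times W_j$ to $W_j\times V_j$, and a renumbering restores the dimension pattern $(j+r,\,n-j)$. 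If you incorporate this transpose step (equivalently, run your induction on ${}^t\Delta$), your argument goes through; the remaining ingredients you invoke (localization, spreading out, base change of Chow groups) are standard, though in the singular setting the generic--fibre and injectivity statements require the resolution--plus--descent argument of lemmas \ref{lim} and \ref{inj}, and the base--change step should be phrased by descending everything to a finitely generated field $k$ and embedding $k(W\cap X)$ back into $\C$, rather than treating $k(W\cap X)$ as an extension of $\C$.
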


\begin{proof} This is an application of the Bloch--Srinivas method \cite{BS}. 
We use the following two well--known lemmas:

\begin{lemma}\label{lim} Let $X$ and $Z$ be quasi--projective varieties, and suppose $Z$ is irreducible of dimension $n$. Then for any $i$
  \[  A_i(X_{k(Z)})\cong \varinjlim A_{i+n}(X\times U)\ ,\]
  where the limit is taken over opens $U\subset Z$.
  \end{lemma}
  
  \begin{proof} This is usually stated for smooth projective varieties \cite[appendix to Lecture 1]{B}. If one is brave, one goes checking in Quillen's work to see that the proof given in loc. cit. for the smooth case still goes on for singular varieties. Alternatively, take a resolution of singularities and reduce to the smooth case using the ``descent'' exact sequences, and the fact that $\varinjlim$ is an exact functor.
 \end{proof}
 
\begin{lemma}\label{inj} Let $X$ be a quasi--projective variety defined over a field $k$, and let $k\subset K$ be a field extension. Then
  \[A_i(X_k)_{\QQ}\to A_i(X_K)_{\QQ}\]
  is injective.
  \end{lemma}
  
  \begin{proof} This is usually stated for smooth varieties \cite[appendix to Lecture 1]{B}, but the same argument works in general: use lemma \ref{lim} to reduce to the case of a finite extension. For a finite extension, take a resolution of singularities; for smooth varieties, the existence of the norm implies the extension map is a split injection; by descent, the same is true for singular varieties.
 \end{proof}

Now we proceed with the proof of lemma \ref{diag}.
We can reduce to some subfield $k\subset\C$ which is finitely generated over its prime subfield. Consider the restriction
  \[ \Delta\in A_n(\bar{X}\times\bar{X})_{\QQ}\to A_n(X\times\bar{X})_{\QQ}\to A_0(X_{k(\bar{X})})_{\QQ}\ .\]
  The last group is supported in dimension $r$, so we get a rational equivalence
    \[ \Delta=\Delta_0+\Delta^1+\Gamma^1\ \ \in A_n(\bar{X}\times\bar{X})_{\QQ}\ ,\]
    where $\Delta_0$ is supported on $V_0\times\bar{X}$, where $V_0$ has dimension $r$, and $\Delta^1$ is supported on $\bar{X}\times W_1$ for some divisor $W_1$, and $\Gamma_1$ is supported on $D\times\bar{X}$. 
    
 Applying the same process to $\Delta^1$ and continuing inductively, we end up with a decomposition
    \[\Delta=\Delta_0+\Delta_1+\cdots+\Delta_n+\Gamma^\prime\ \ \in A_n(\bar{X}\times\bar{X})_{\QQ}\ ,\]
    where the $\Delta_j$ are as desired, but $\Gamma^\prime$ is supported on $D\times\bar{X}$. Taking the transpose and renumbering, we end up with a decomposition as desired.
\end{proof}

\begin{remark} In case $X$ is smooth projective, lemma \ref{diag} was proven in \cite{moi}, inspired by \cite{BS} and \cite{P}.
\end{remark}

\section{The smooth projective case}

The following is well--known.

\begin{proposition}[Jannsen \cite{J}, Kimura, Vial]\label{equiv} Let $X$ be a smooth projective variety of dimension $n$. The following are equivalent:

\item{(\rom1)} The groups $A_i^{alg}X_{\QQ}$ are $0$ for $i<{n\over 2}$;

\item{(\rom2)} $X$ has trivial Chow groups;

\item{(\rom3)} Niveau$(A_iX_{\QQ})\le 0$ for all $i$;
  
\item{(\rom4)} The cycle class maps induce a ring isomorphism  
  \[  A_\ast X_{\QQ}\ \stackrel{\cong}{\to}\ H_\ast(X,\QQ)\ ;\]
  
\item{(\rom5)} For any variety $Z$, and for any $i$, the product map induces an isomorphism
  \[  \bigoplus_{l+m=i} A_lX_{\QQ}\otimes A_mZ_{\QQ}\ \stackrel{\cong}{\to}\ A_i(X\times Z)_{\QQ}\ ;\]
  
\item{(\rom6)} For any variety $Z$, and for any $i,j$, the product map induces an isomorphism
  \[   \bigoplus_{l+m=i} A^lX_{\QQ}\otimes A^m(Z,j)_{\QQ}\ \stackrel{\cong}{\to}\ A^i(X\times Z,j)_{\QQ}\ \]  
  of higher Chow groups \cite{B2}.

  \end{proposition}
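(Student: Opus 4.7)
The plan is to establish the equivalences via the cycle $(\text{iv}) \Rightarrow (\text{ii}) \Rightarrow (\text{iii}) \Rightarrow (\text{iv})$, then handle $(\text{i})$, $(\text{v})$, $(\text{vi})$ separately. The implication $(\text{iv}) \Rightarrow (\text{ii})$ is immediate from injectivity being part of an isomorphism, and $(\text{ii}) \Rightarrow (\text{i})$ follows because cycles algebraically equivalent to zero are homologically trivial, so injectivity of $cl_i$ forces $A_i^{alg}X_{\QQ} = 0$ for all $i$.

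The main step is $(\text{ii}) \Rightarrow (\text{iii})$, proven inductively using lemma \ref{diag}. Injectivity of $cl_0$ places $A_0X_{\QQ}$ inside the finite-dimensional group $H_0(X,\QQ)$, so $A_0X_{\QQ}$ is supported on finitely many points, giving Niveau$(A_0X_{\QQ}) \le 0$. Applying lemma \ref{diag} with $\bar X = X$, $D = \emptyset$, $r = 0$ at the bottom dimension yields a partial decomposition $\Delta = \Delta_0 + \Delta'$, with $\Delta_0$ supported on $\{\text{pts}\} \times X$ and $\Delta'$ on $X \times D_1$ for some divisor $D_1 \subset X$. The correspondence action $\Delta'_\ast$ on $A_{n-1}(X)_{\QQ}$ factors through $A_{n-1}(\widetilde D_1)_{\QQ}$ for a resolution $\widetilde D_1 \to D_1$, where a Gysin/diagram chase shows injectivity of cycle class maps descends. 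Iterating on dimension establishes Niveau $\le 0$ for every $A_iX_{\QQ}$.

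For $(\text{iii}) \Rightarrow (\text{iv})$ I apply lemma \ref{diag} with $r = 0$ to obtain $\Delta = \sum_{j=0}^n \Delta_j$ with $\Delta_j$ supported on $V_j \times W_j$ of dimensions $(j, n-j)$. The action of $(\Delta_j)_\ast$ on $H^\ast(X,\QQ)$ factors through smooth models of $V_j$ and $W_j$ and lands in the algebraic (Hodge type $(j,j)$) part of the relevant degree. From $\sum_j (\Delta_j)_\ast = \Delta_\ast = \text{id}$ on $H^\ast(X,\QQ)$, I deduce that cohomology is generated by algebraic classes of pure type $(i,i)$ and all odd cohomology vanishes; together with the injectivity from $(\text{ii})$ this gives the ring isomorphism $(\text{iv})$. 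The equivalence $(\text{i}) \Leftrightarrow (\text{ii})$ then follows by invoking Kimura--O'Sullivan finite-dimensionality as in Vial's work: vanishing of algebraically trivial cycles below the middle dimension forces the motive of $X$ to be a Tate summand, whence all cycle class maps are injective. Finally, $(\text{iii}) \Rightarrow (\text{v}), (\text{vi})$ follows by acting with $\Delta \times \Delta_Z$ on $A_\ast(X \times Z)_{\QQ}$ (respectively higher Chow cycles): each summand $(\Delta_j \times \Delta_Z)_\ast$ expresses a class as an external product of elements of $A^\ast X$ and $A^\ast Z$ (resp.\ $A^\ast(Z,j)$), yielding the Künneth decomposition.

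The main obstacle is the induction in $(\text{ii}) \Rightarrow (\text{iii})$: propagating injectivity of cycle class maps to resolutions of the auxiliary divisors $D_1, D_2, \ldots$ requires verifying that the correspondence action commutes with Gysin pushforwards and pullbacks from the resolution, so that the smaller-dimensional varieties inherit the hypothesis of the proposition. This compatibility, rather than the diagonal decomposition itself, is where the subtlety lies.
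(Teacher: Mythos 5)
Your overall strategy---decompose the diagonal via lemma \ref{diag} and let the pieces act as correspondences---is the paper's, and your handling of (iv)$\Rightarrow$(ii), (ii)$\Rightarrow$(i) and (iii)$\Rightarrow$(iv) is essentially correct. But the step you yourself single out as the crux, the induction for (ii)$\Rightarrow$(iii), is both unnecessary and broken: there is no reason the auxiliary divisor $D_1$ appearing in a partial decomposition of the diagonal (or its resolution $\widetilde{D_1}$) should inherit injectivity of cycle class maps, so the ``descent of the hypothesis'' you propose cannot be carried out. The implication is in fact immediate: if $cl_i$ is injective, then $A_iX_{\QQ}$ embeds into the finite--dimensional space $H_{2i}(X,\QQ)$, hence is spanned by the classes of finitely many $i$--dimensional subvarieties, and their union $Y$ satisfies $A_i(X\setminus Y)_{\QQ}=0$ by the localization sequence. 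This one--line observation is all the paper uses.

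The other genuine gaps concern (v) and (vi). For (v), letting $\Delta_j\times\Delta_Z$ act gives at best \emph{surjectivity} of the product map (and even that only after first reducing to $Z$ smooth projective via the descent exact sequences and noetherian induction, since $Z$ is an arbitrary, possibly singular, quasi--projective variety); \emph{injectivity} of the product map is nowhere addressed in your sketch. The paper proves it with a dual--basis argument using the perfect pairing $A_LX_{\QQ}\times A_{n-L}X_{\QQ}\to\QQ$ furnished by (iv), and some such input is indispensable. For (vi), correspondences do not act on higher Chow groups of a singular $Z$ for free: the paper first invokes the moving lemma for higher Chow groups to reduce to $Z$ smooth projective, and then uses the Kimura--Vial theorem that (ii) forces the Chow motive of $X$ to be a sum of twisted Lefschetz motives, so that $A^i(X\times Z,j)$ is computed from the motive of $Z$ alone. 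Finally, your appeal to finite--dimensionality for (i)$\Rightarrow$(ii) leaves unexplained precisely the point that makes (i) nontrivial, namely why vanishing of $A_i^{alg}X_{\QQ}$ only for $i<n/2$ suffices; the paper's inductive argument with Griffiths groups (deducing $\grif^{n-1}X_{\QQ}=0$ from the first partial decomposition, hence $A_1^{hom}X_{\QQ}=0$, and iterating) is exactly what propagates the vanishing past the middle dimension.
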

  
  \begin{proof} We will recall the proof, as a warm--up for what follows. To see that (\rom1)$\Rightarrow $(\rom2), we work inductively. First, 
  the hypothesis $A_0^{alg}X_{\QQ}=A_0^{hom}X_{\QQ}=0$ implies a decomposition of the diagonal
    \[ \Delta=X\times x+\Gamma^1\ \ \in A^n(X\times X)_{\QQ}\ ,\]
    with $\Gamma^1$ supported on $D\times X$, for some divisor $D$. Considering the action of $\Delta$ on $\grif^{n-1}X_{\QQ}$, we find
      \[\grif^{n-1}X_{\QQ}=0\ .\]
      (Indeed, the action factors over $\grif^{n-1}(\widetilde{D})_{\QQ}=0$, for some desingularisation $\widetilde{D}$.)
    Taken together with the hypothesis $A^{n-1}_{alg}X_{\QQ}=0$, we find that $A_1^{hom}X_{\QQ}=0$, and we continue likewise.
    
    After ${n\over 2}$ steps, we end up with a decomposition
    \[   \Delta=\Delta_0+\Delta_1+\cdots+\Delta_{ \lfloor {n\over 2}\rfloor}+\Gamma\ \ \in A^n(X\times X)_{\QQ}\ ,\]
    where $\Delta_j$ comes from $A^jX_{\QQ}\otimes A^{n-j}X_{\QQ}$, and $\Gamma$ is supported on $V\times X$ with $\dim V\le {n\over 2}$.
  We can apply this decomposition to $A_i^{hom}X_{\QQ}$ to check that (\rom2) holds. (Indeed, for $i\ge {n\over 2}$, the component $\Gamma$ does not act on $A_i^{hom}X_{\QQ}$ for dimension reasons; neither do the $\Delta_j$ act.)
  
  To see that (\rom2)$ \Rightarrow $(\rom3), remark that the cohomology groups $H^{2i}(X,\QQ)$ are finite--dimensional $\QQ$--vector spaces.
  
  To get the implication (\rom3)$\Rightarrow$(\rom4), let the decomposition of the diagonal act on the kernel and cokernel to see that both vanish.
  
  Now, let's prove the implication (\rom4)$\Rightarrow $(\rom5). Let $S\subset Z$ denote the singular locus, and let $\wt{Z}\to Z$ denote a resolution of singularities
  with exceptional divisor $E$.
 There is a commutative diagram with exact rows
   \[\begin{array}[c]{ccccccc}
       \to& A_i(X\times E)_{\QQ} &\to& A_i(X\times\wt{Z})_{\QQ}\oplus A_i(X\times S)_{\QQ}&\to& A_i(X\times Z)_{\QQ}&\to 0\\
             & \uparrow&&\uparrow&&\uparrow&\\
         \to&     \bigoplus_{l+m=i} A_lX_{\QQ}\otimes A_mE_{\QQ} &\to& \bigoplus_{l+m=i} A_lX_{\QQ}\otimes (A_m(\wt{Z})\oplus A_mS)_{\QQ}  &\to& \bigoplus_{l+m=i} A_lX_{\QQ}\otimes A_mZ_{\QQ}&\to 0\end{array}\]
    By noetherian induction, we are thus reduced to the case where $Z$ is smooth. Writing out a similar diagram for a compactification, we reduce to the case where $Z$ is smooth and projective.     
          
         Let's suppose now $Z$ is smooth projective, say of dimension $d$. We first prove surjectivity of the product map: 
   Take $c$ an element of $A_i(X\times Z)_{\QQ}$. We may suppose everything ($X$, $Z$, $c$ and the subvarieties supporting the $A_iX_{\QQ}$) is defined over a field $k\subset\C$ finitely generated over its prime subfield.  Consider what happens to $c$ under the restriction
   \[  c\in A_i(X\times Z)_{\QQ}\to A_{i-d}(X_{k(Z)})_{\QQ}=\varinjlim A_i(X\times U)_{\QQ}\ ,\]
   where the limit is taken over opens $U\subset Z$, and the equality is established in lemma \ref{lim}. 
   Since $k(Z)\subset\C$, lemma \ref{inj} implies that
     \[A_{i-d}(X_{k(Z)})_{\QQ}\to A_{i-d}(X_{\C})_{\QQ}\]
   is injective, so that 
     \[\hbox{Niveau}    \Bigl(  A_{i-d}(X_{k(Z)})_{\QQ}\Bigr)\le 0\ .\]
 It follows that the cycle $c$ can be written
   \[ c= b\times Z+c^\prime\ \ \in A_i(X\times Z)_{\QQ}\ ,\]
   where $b\in A_{i-n}X_{\QQ}$ and $c^\prime$ supported on $X\times Z^\prime$, for $Z^\prime\subset Z$ some divisor. By induction, the statement is true for $X\times Z^\prime$, and so we find that $c$ is a sum of product cycles as desired.
   
   Next, we prove the product map is injective. So let
   \[ p\colon\ \    \bigoplus_{l+m=i} A_lX_{\QQ}\otimes A_mZ_{\QQ}\ \stackrel{}{\to}\ A_i(X\times Z)_{\QQ}\ \]
   denote the product map, and let $a$ be an element in $\hbox{Ker}\, p$.
  We write 
    \[ a=\sum_{l+m=i} a_{l,m}\in \bigoplus_{l+m=i} A_lX_{\QQ}\otimes A_mZ_{\QQ}\ ,\]
   and we let $L$ be the maximum $l$ for which $a_{l,m}\not=0$. Hypothesis (\rom4) implies that $A_LX_{\QQ}$ is finite--dimensional, and that there is a perfect pairing
   \[  A_LX_{\QQ}\times A_{n-L}X_{\QQ}\ \to\ \QQ\ .\]
  Let $b_1,\ldots,b_r$ be a basis of $A_LX_{\QQ}$, and let $b_1^\vee,\ldots,b_r^\vee$ denote the dual basis of $A_{n-L}X_{\QQ}$.
  We write
    \[ a_{L,m}=\sum_j  c_j b_j\otimes d_j\ \ \in A_LX_{\QQ}\otimes A_mZ_{\QQ}\ .\]
  Since by hypothesis, $p(a)=0$, we have
    \[  p(a_{L,m})\cdot (b_j^\vee\times Z)=p(a)\cdot (b_j^\vee\times Z)=0\ \ \in A_m(X\times Z)_{\QQ}\ .\]  
    But $p(a_{L,m})\cdot (b_j^\vee\times Z)=c_j (\hbox{point})\times d_j$ projects to $c_j d_j\in A_m( Z)_{\QQ}$ under projection to the second factor, so we find
    \[ c_j d_j=0\ \ \in A_m(Z)_{\QQ}\ \ \forall j\ .\]
   But this means $a_{L,m}=0$; contradiction.
     
   To get that (\rom5) implies (\rom1): taking $Z=X$, one obtains a complete decomposition of the diagonal of $X$; having the diagonal act on $A_i^{alg}X_{\QQ}$, one obtains the required vanishing.
   
 It remains to establish an equivalence with (\rom6): using a commutative diagram extending the above diagram to the left (this exists, thanks to the notorious moving lemma for higher Chow groups \cite{B3}, \cite{Lev}), one is again reduced to the case $Z$ smooth projective. Now we use a result of Kimura \cite{Kim2} and Vial \cite[Theorem 5]{V2}, which states that (\rom2) is equivalent to the fact that the Chow motive of $X$ is a sum of twisted Lefschetz motives. Hence the motive of $X\times Z$ is a sum of twists of the motive of $Z$; as higher Chow groups only depend on the Chow motive, this implies (\rom6).
 
 Finally, (\rom6) $\Rightarrow$ (\rom5) is obvious, and we are done.
   
    \end{proof}

\begin{remark} Jannsen proved that properties (\rom2), (\rom3) and (\rom4) in proposition \ref{equiv} are equivalent \cite {J}.
%The fact that this implies the Chow motive of $X$ is a sum of twisted Lefschetz motives (whence properties (\rom5) and (\rom6)) was proven by Kimura \cite {Kim2} and Vial \cite[Theorem 5]{V2}.

The fact that it suffices to consider algebraically trivial cycles (i.e. point (\rom1) in proposition \ref{equiv}) is a particular instance of a more general phenomenon, discovered by Vial: if a morphism of Chow motives $f\colon N\to M$, with $N$ finite--dimensional, induces a surjection $A^{alg}_\ast(N)_{\QQ}\to A_\ast^{alg}(M)_{\QQ}$, then also $A_\ast^{hom}(N)_{\QQ}\to A_\ast^{hom}(M)_{\QQ}$ is surjective \cite[Theorem 7]{V3}. The above manifestation is just the case where $N$ is a Lefschetz motive.

Property (\rom5) is studied in depth in \cite{T2}, where it is called the ``Chow--K\"unneth property''. Notably, \cite[Theorem 4.1]{T2} generalizes the result of Kimura and Vial evoked in the above proof.

\end{remark}

\section{The smooth quasi--projective case}

In case $X$ is a smooth quasi--projective variety, the situation is not as well--understood as in the smooth projective case; the equivalences of proposition \ref{equiv} become difficult open problems. For instance, we raise the following questions:

\begin{question} Does the implication (\rom1)$\Rightarrow$(\rom2) of proposition \ref{equiv} still hold for $X$ smooth quasi--projective ?
\end{question}

\begin{question} Does the implication (\rom3)$\Rightarrow$(\rom2) of proposition \ref{equiv} still hold for $X$ smooth quasi--projective ?
\end{question}

\begin{remark}\label{voisin} For both questions, the answer is positive provided the ``Voisin standard conjecture'' \cite[Conjecture 0.6]{V0}, \cite{Vo} is true. Moreover, a positive answer to either question would imply the following result: if $X$ is smooth quasi--projective with trivial Chow groups, then any open $U\subset X$ has trivial Chow groups. As shown in \cite{V0}, this result (i.e. that ``having trivial Chow groups'' transfers from a variety to its open subsets)
would be a consequence of the truth of the Voisin standard conjecture.
\end{remark}

Here is what we can prove unconditionally:

\begin{proposition}\label{smoothtrivial} Let $X$ be a smooth quasi--projective variety with trivial Chow groups. Then the cycle class maps induce isomorphisms
  \[  \bigoplus_i A_i X_{\QQ}\ \stackrel{\cong}{\to}\ \bigoplus_\ell W_{-\ell} H_\ell (X,\QQ)\ .\]
  That is,
  \[ W_{-\ell} H_\ell (X,\QQ)=\begin{cases} 0&\hbox{if\ $\ell$\ is\ odd;}\\
                                             \ima\, cl_i&\hbox{if\ $\ell=2i$.}\\
                                             \end{cases}\]
            \end{proposition}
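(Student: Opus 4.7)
The plan is to adapt Jannsen's original argument (theorem \ref{theo}) via the decomposition of the diagonal, lemma \ref{diag}, to the smooth quasi--projective setting. First I would check that lemma \ref{diag} applies with $r=0$: since $cl_i$ is injective and $H_{2i}(X,\QQ)$ is finite dimensional, each $A_iX_{\QQ}$ is finite dimensional, and taking finitely many generators (each supported on an $i$--dimensional subvariety) shows $\hbox{Niveau}(A_iX_{\QQ})\le 0$ for every $i$. Pick a smooth projective compactification $\bar{X}\supset X$ (via resolution of singularities) with boundary $D$; lemma \ref{diag} then yields
\[ \Delta=\Delta_0+\Delta_1+\cdots+\Delta_n+\Gamma\quad\in A_n(\bar{X}\times\bar{X})_{\QQ}\ ,\]
with $\Delta_j$ supported on $V_j\times W_j$ where $\dim V_j=j$ and $\dim W_j=n-j$, and $\Gamma$ supported on $\bar{X}\times D$.

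Next I would identify the target. For $X$ smooth with smooth projective compactification $\bar{X}$, Deligne's weight theory identifies
\[ W_{-\ell}H_\ell(X,\QQ)=\ima\bigl(H_\ell(\bar{X},\QQ)\to H_\ell(X,\QQ)\bigr)\ ,\]
with $H_\ell$ interpreted as Borel--Moore homology. Any class in $W_{-\ell}H_\ell(X,\QQ)$ is thus the restriction $\alpha|_X$ of some $\alpha\in H_\ell(\bar{X},\QQ)$. Since $\Delta_\ast$ acts as the identity on $H_\ell(\bar{X},\QQ)$ (Poincar\'e duality for the smooth projective $\bar{X}$), we obtain
\[ \alpha=\sum_{j=0}^n(\Delta_j)_\ast\alpha+\Gamma_\ast\alpha\quad\in H_\ell(\bar{X},\QQ)\ .\]
The class $[\Delta_j]\in H^{2n}(\bar{X}\times\bar{X},\QQ)$ has K\"unneth type $(2(n-j),2j)$ (forced by the support of $\Delta_j$), so $(\Delta_j)_\ast$ annihilates $H_\ell(\bar{X})$ unless $\ell=2(n-j)$, in which case its image lies in the span of the fundamental classes of the top--dimensional components of $W_j$; these are classes of algebraic $(n-j)$--cycles on $\bar{X}$. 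Likewise $\Gamma_\ast\alpha$ lies in the image of $H_\ell(D)\to H_\ell(\bar{X})$.

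Restricting to $X$, the contribution $\Gamma_\ast\alpha$ vanishes in $H_\ell(X,\QQ)$ because $D\cap X=\emptyset$. For $\ell$ odd, every $(\Delta_j)_\ast\alpha=0$ as well, whence $\alpha|_X=0$ and $W_{-\ell}H_\ell(X,\QQ)=0$. For $\ell=2i$, only the term $j=n-i$ survives, and its restriction to $X$ is a $\QQ$--linear combination of classes $cl_i(W\cap X)$ for $W$ an irreducible component of $W_{n-i}$; hence $\alpha|_X\in\ima\,cl_i$. Conversely, for any $i$--cycle $Z\subset X$ the closure $\bar{Z}\subset\bar{X}$ satisfies $[\bar{Z}]\in H_{2i}(\bar{X},\QQ)$ and restricts to $cl_i(Z)$, giving the reverse inclusion. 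Combined with the hypothesized injectivity of each $cl_i$, this yields the claimed isomorphism.

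The main subtlety I foresee is verifying the compatibility of the correspondence actions with the restriction map $H_\ast(\bar{X})\to H_\ast(X)$ used above, and tracking the weight filtration through these operations; the supporting subvarieties $V_j,W_j$ may be singular, but passing to resolutions and observing that the fundamental classes involved still arise from algebraic cycles on $\bar{X}$ (and on $X$ after intersection) should handle this difficulty.
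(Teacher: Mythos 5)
Your argument is correct and follows essentially the same route as the paper: reduce to $\hbox{Niveau}(A_iX_{\QQ})\le 0$, decompose the diagonal of a smooth compactification via lemma \ref{diag}, and kill all but one term of the resulting correspondence action for type/dimension reasons, the surviving term being algebraic. The only (harmless) difference is that you read off the vanishing from the K\"unneth type of $[\Delta_j]$ and work with an arbitrary lift to $H_\ell(\bar{X},\QQ)$, whereas the paper factors the action through resolutions of $V_j$, $W_j$ using the Hodge filtration and, in the even case, first produces a Hodge--class lift via the polarisation.
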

                                   
Since ``having trivial Chow groups'' obviously implies that the niveau of all Chow groups is $\le 0$, proposition \ref{smoothtrivial} follows from the following more general proposition:

\begin{proposition}\label{smoothcase} Let $X$ be a smooth quasi--projective variety of dimension $n$. Suppose
  \[\hbox{Niveau}(A_iX_{\QQ})\le 0\ \ \ \hbox{for\ all\ }i\ .\]
  Then 
    \[ W_{-\ell} H_\ell (X,\QQ)=\begin{cases} 0&\hbox{if\ $\ell$\ is\ odd;}\\
                                             \ima\, cl_i&\hbox{if\ $\ell=2i$.}\\
                                             \end{cases}\]
  %(that is, \[   cl_i\colon A_iX_{\QQ}\ \to\ \gr^F_{-i} \gr H_{2i}X\]
  %is surjective.)
 Moreover,
     \[  cl^i\colon A^i_cX_{\QQ}\ \to\  \gr^W_{2i} H^{2i}_c(X,\QQ)\]
  is injective.
  %even isomorphism ????  
  \end{proposition}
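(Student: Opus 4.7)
The plan is to adapt Jannsen's decomposition-of-the-diagonal argument to the quasi--projective setting by applying lemma \ref{diag} to a smooth compactification. By Hironaka, fix a smooth projective compactification $\bar X\supset X$ with boundary $D=\bar X\setminus X$. The niveau hypothesis makes lemma \ref{diag} available with $r=0$, giving
\[ \Delta = \sum_{j=0}^{n}\Delta_j+\Gamma\ \in\ A_n(\bar X\times\bar X)_{\QQ},\]
with $\Delta_j$ supported on $V_j\times W_j$ ($\dim V_j=j$, $\dim W_j=n-j$) and $\Gamma$ supported on $\bar X\times D$. The correspondence action $(\Delta_j)_\ast\colon H^i(\bar X,\QQ)\to H_{2n-i}(\bar X,\QQ)$ factors as pullback to $V_j$ then pushforward from $W_j$; because $H^i(V_j)=0$ for $i>2j$ and $H_{2n-i}(W_j)=0$ for $i<2j$, it vanishes unless $i=2j$, whereas $\Gamma_\ast$ always factors through $(\iota_D)_\ast H_{2n-i}(D)$.

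For the first claim, since $\bar X$ is smooth projective the action $\Delta_\ast$ is the Poincar\'e duality isomorphism, so $H_\ell(\bar X,\QQ)$ is covered by the images of the summands above. Composing with the restriction $\rho\colon H_\ell(\bar X,\QQ)\to H_\ell(X,\QQ)$ from the localisation sequence for $X\hookrightarrow\bar X$, any contribution from $(\iota_D)_\ast$ dies, being two consecutive arrows of that sequence. For $\ell$ odd nothing else contributes, so $W_{-\ell}H_\ell(X)=\ima(\rho)=0$. For $\ell=2i$ the only surviving summand is $(\iota_{W_{n-i}})_\ast H_\ell(W_{n-i})$, whose image in $H_{2i}(X)$ is spanned by Borel--Moore classes of the $i$--dimensional components of $W_{n-i}\cap X$, and these lie in $\ima\, cl_i$. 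Combined with $\ima\, cl_i\subseteq W_{-2i}H_{2i}(X)$ and the standard identification $W_{-\ell}H_\ell(X)=\ima(\rho)$ for any smooth compactification, the first claim follows.

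For the injectivity of $cl^i$, observe that $\gr^W_{2i}H^{2i}_c(X)\hookrightarrow H^{2i}(\bar X)$: in the sequence $H^{2i-1}(D)\to H^{2i}_c(X)\to H^{2i}(\bar X)$ the first term (cohomology of a projective variety) has weights $\le 2i-1$. Thus $cl^i(\alpha)=0$ forces the class of $\alpha$ in $H^{2i}(\bar X)$ to vanish. Applying the decomposition to $\alpha\in A^i_cX\hookrightarrow A^i\bar X=A_{n-i}\bar X$, dimension counting again leaves only
\[ \alpha=(\iota_{W_i})_\ast w_i+(\iota_D)_\ast d\ \in\ A^i\bar X.\]
Following Jannsen's argument, $(\iota_{W_i})_\ast w_i$ is a linear combination of the top-dimensional components of $W_i$ whose coefficients are the intersection numbers $\deg([V_i^{(k)}]\cdot\alpha)$; these vanish since $\alpha$ is cohomologically trivial on $\bar X$. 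Hence $\alpha\equiv(\iota_D)_\ast d$ in $A^i\bar X$, and the compact-support condition $\iota_D^\ast\alpha=0$ should then force $\alpha=0$ in $A^i_cX$. The hardest step, and the main obstacle I anticipate, is this last deduction: the self-intersection formula only gives $c_1(N_{D/\bar X})\cdot d=0$ in $A^iD$, which does not by itself force $d=0$. Settling it will likely require first refining $\bar X$ so that $V_i$ and $W_i$ meet $D$ transversally (making the two summands live on disjoint loci), or iterating the decomposition along a stratification of $D$.
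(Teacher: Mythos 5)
Your treatment of the first claim is correct and follows the paper's route (decomposition of the diagonal via lemma \ref{diag} with $r=0$, correspondence action, dimension count); your dimension count on the full groups $H^i(V_j)$ and $H_{2n-i}(W_j)$ is a slightly cruder but valid substitute for the paper's count on Hodge graded pieces, and it even lets you skip the polarisation argument the paper uses to lift a Hodge class from $X$ to $\bar{X}$ in the case $\ell=2i$.

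For the injectivity of $cl^i$, however, the obstacle you flag at the end is a genuine gap, and the way you have set things up you cannot close it: with $\Gamma$ supported on $\bar{X}\times D$ you are forced to confront a residual term $(\iota_D)_\ast d$, and neither the self-intersection formula nor transversality tricks will make it go away. The correct move is to \emph{transpose the decomposition}: the proof of lemma \ref{diag} in fact first produces $\Delta=\sum_j\Delta_j+\Gamma'$ with $\Gamma'$ supported on $D\times\bar{X}$ (it is only transposed at the last step to get the stated form), so you may use that version here. For a correspondence $\Gamma'$ supported on $D\times\bar{X}$, the action $\Gamma'_\ast(\bar{a})=(\pi_2)_\ast\bigl(\pi_1^\ast\bar{a}\cap\Gamma'\bigr)$ factors through the restriction $\psi^\ast\bar{a}\in A^i(D)_{\QQ}$, and this restriction vanishes by the very definition of $A^i_cX$ (the exact sequence $0\to A^i_cX\to A^i\bar{X}\to A^iD$). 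Hence $\Gamma'_\ast\bar{a}=0$ and the problematic boundary term never appears: one gets $\bar{a}=(\Delta_i)_\ast\bar{a}$ directly, your observation that $\gr^W_{2i}H^{2i}_c(X)\hookrightarrow H^{2i}(\bar{X})$ makes $\bar{a}$ homologically trivial, the action of $\Delta_i$ on $A^i_{hom}\bar{X}_{\QQ}$ factors through $A^0_{hom}(\widetilde{W_i})_{\QQ}=0$ (this is your intersection-number computation), and the same exact sequence gives the injectivity of $A^i_cX_{\QQ}\to A^i\bar{X}_{\QQ}$ needed to conclude $a=0$. Note the pleasant symmetry: in the homology statement one wants the error term to die after applying $\tau^\ast$, so one puts $D$ in the second factor; in the compactly supported statement the input already dies on $D$, so one puts $D$ in the first factor.
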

  
 \begin{proof} Let $\tau\colon X\subset\bar{X}$ be a smooth compactification, with boundary $D$. From lemma \ref{diag}, we obtain a decomposition of the diagonal of $\bar{X}$
    \[ \Delta=\Delta_0+\Delta_1+\cdots+\Delta_n+\Gamma\ \ \in A_n(\bar{X}\times\bar{X})_{\QQ}\ ,\]
  where $\Delta_j$ is supported on $V_j\times W_j$, and $V_j$ (resp. $W_j$) is of dimension $j$ (resp.  $n-j$), and $\Gamma$ is supported on $\bar{X}\times D$. 
  Let $a\in \gr_F^{k} W_{-\ell} H_\ell(X,\C)$, with $\ell\not=-2k$. Let $\bar{a}\in \gr_F^k H_\ell(\bar{X},\C)$  be an element restricting to $a$.
  
  The action of $\Delta_j$ on $\bar{a}$ is $0$ for dimension reasons.
 (Indeed, let $\widetilde{V_j}$ and $\widetilde{W_j}$ denote resolutions of singularities.
 Then the action of $\Delta_j$ factors over 
   \[  H^{k+n,n-\ell-k}(\widetilde{V_j},\C)\]
   and
   \[ H^{k+n-j,n-\ell-k-j}(\widetilde{W_j},\C)\ ,\]
   and one of these groups is $0$ for dimension reasons.) 
   It follows that
     \[a=\tau^\ast\bar{a}=\tau^\ast\bigl( (\Delta_0)_\ast (\bar{a})+\cdots+(\Delta_n)_\ast(\bar{a})\bigr)=0\ \ \in \gr_F^k W_{-\ell} H_{\ell}(X,\C)\ ,\]
     so $\gr_F^k W_{-\ell} H_{\ell}(X,\C)$ for all $\ell\not=-2k$.
     In particular, for $\ell$ odd we find that
     \[ W_{-\ell} H_\ell(X,\C)=\bigoplus_j \gr_F^j W_{-\ell} H_\ell(X,\C)=0\ .\]

    Next, let $a\in  W_{-2i} H_{2i}(X,\QQ)\cap F^{-i}$. Using the polarisation on $H_{2i}(\bar{X},\QQ)$ (cf. \cite[]{Vo}), one finds there exists a Hodge class
 $ \bar{a}\in   H_{2i}(\bar{X},\QQ)$ which restricts to $a$ (i.e. $\tau^\ast\bar{a}=a$).   
   
 For $j\not=n-i$, the action of $\Delta_j$ on $\bar{a}$ is $0$ for dimension reasons. 
 (
 This is similar to the prior parenthesis: the action of $\Delta_j$ factors over 
   \[  H^{n-i,n-i}(\widetilde{V_j},\QQ)\]
   and
   \[ H^{n-i-j,n-i-j}(\widetilde{W_j},\QQ)\ ,\]
   and one of these groups is $0$ for dimension reasons.)
   For $j=n-i$, we have that
     \[  (\Delta_{n-i})_\ast \bar{a}\subset \ima\bigl( H_{2i}(W_{n-i})\to H_{2i}\bar{X}\bigr)\ \subset \ima cl_i\ .\]
    It follows that
      \[ a=\tau^\ast\bar{a}=\tau^\ast\bigl( (\Delta_{n-i})_\ast\bar{a}+ \Gamma_\ast\bar{a}\bigr) =\tau^\ast\bigl( (\Delta_{n-i})_\ast\bar{a}\bigr)\ \subset\ima cl_i\ .\]
      
   It remains to prove the statement for $cl^i$. Taking the transpose of all elements involved, we may suppose we have a decomposition
   \[ \Delta=\Delta_0+\Delta_1+\cdots+\Delta_n+\Gamma\ \ \in A_n(\bar{X}\times\bar{X})_{\QQ}\ ,\]
  where $\Delta_j$ are as before, but $\Gamma$ is now supported on $D\times\bar{X}$.       
      
      Let $a\in A^{i}_cX_{\QQ} $, and let $\bar{a}$ be the image of $a$ in $ A^{i}\bar{X}_{\QQ}$. The restriction of $\bar{a}$ to $D$ is $0$ (i.e., if $\psi\colon D\to\bar{X}$ denotes the inclusion, we have $\psi^\ast(\bar{a})=0\in A^{i}(D,\QQ)$), hence $\Gamma_\ast\bar{a}=0$.
      Just as above, the correspondence $\Delta_j$ does not act on $\bar{a}$ except for $j=i$, hence
        \[\bar{a}=(\Delta_i)_\ast\bar{a}\ .\]
   Suppose now $\bar{a}\in A^i_{hom}\bar{X}_{\QQ}$. The action of $\Delta_i$ on $A^i_{\hom}\bar{X}_{\QQ}$ factors over $A^0_{hom}(\widetilde{W_i})_{\QQ}$, which is $0$; it follows that $\bar{a}=0$, whence $a=0$.    
      \end{proof}

\section{The singular case}

In this section, we consider quasi--projective (possibly singular) varieties $X$. We prove our main result as promised in the introduction; this is
 a version of Jannsen's theorem for varieties whose singular locus is not too large:

\begin{theorem}\label {jannsen0} Let $X$ be a quasi--projective variety of dimension $n$, and suppose there is a compactification of $X$ with singular locus of dimension $\le {n+1\over 3}$. Suppose $X$ has trivial Chow groups. Then cycle class maps induce an isomorphism
  \[ \bigoplus_i A_iX_{\QQ}\ \stackrel{\cong}{\to}\ \bigoplus_\ell W_{-\ell} H_{\ell}(X,\QQ)\ .\]
  That is,
    \[ W_{-\ell} H_\ell (X,\QQ)=\begin{cases} 0&\hbox{if\ $\ell$\ is\ odd;}\\
                                             \ima\, cl_i&\hbox{if\ $\ell=2i$.}\\
                                             \end{cases}\]  
   \end{theorem}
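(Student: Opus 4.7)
The plan is to adapt the proof of Proposition \ref{smoothcase} to the singular compactification $\bar X \supset X$. First, apply Lemma \ref{diag} with $r = 0$ (triviality of the Chow groups of $X$ forces all niveaux to be $\le 0$) to produce
\[
\Delta \;=\; \Delta_0 + \Delta_1 + \cdots + \Delta_n + \Gamma \;\in\; A_n(\bar X \times \bar X)_{\QQ}\,,
\]
with $\Delta_j$ supported on $V_j \times W_j$ of dimensions $j$ and $n-j$ and $\Gamma$ supported on $\bar X \times D$ (here $D = \bar X \setminus X$). As outlined in the introduction, $\Delta_\ast$ is the cap--product $\cap[\bar X]\colon H^{2n-\ell}(\bar X,\QQ) \to H_\ell(\bar X,\QQ)$, so for every $\alpha \in H^{2n-\ell}(\bar X,\QQ)$
\[
\alpha \cap [\bar X] \;=\; \sum_{j=0}^{n} (\Delta_j)_\ast\alpha \;+\; \Gamma_\ast\alpha \,.
\]

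Next, I would analyze each component. Factoring $\Delta_j$ through resolutions $\wt{V_j} \to V_j$ and $\wt{W_j} \to W_j$, the action of $\Delta_j$ on $H^k(\bar X)$ decomposes as pullback to $H^k(\wt{V_j})$, cap with $[\wt{V_j}]$ (the smooth Poincar\'e duality), K\"unneth product with $[\wt{W_j}]$, and pushforward to $H_{2n-k}(\bar X)$. A dimension count shows this composition vanishes unless $k = 2j$, in which case it outputs a scalar multiple of the image of $[\wt{W_j}]$ in $H_{2(n-j)}(\bar X)$. Hence for $\ell = 2n - k$ odd every $(\Delta_j)_\ast\alpha$ is zero, and for $\ell = 2i$ only the component $j = n-i$ survives, producing a class in the image of $H_{2i}(\wt{W_{n-i}}) \to H_{2i}(\bar X)$; after applying $\tau^\ast$ this lies in $\ima\, cl_i$. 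The $\Gamma$--term dies under $\tau^\ast$ since its support is contained in $\bar X \times D$.

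To finish, one must lift any $a \in W_{-\ell} H_\ell(X,\QQ)$ to a class of the form $\tau^\ast(\alpha \cap [\bar X])$ for some $\alpha \in H^{2n-\ell}(\bar X,\QQ)$. This is exactly the role of the companion lemma \ref{durf}: under the hypothesis $\dim\Sigma \le (n+1)/3$, the cap product $\cap[\bar X]$ surjects onto the pure subspace $W_{-\ell} H_\ell(\bar X)$, which in turn surjects onto $W_{-\ell} H_\ell(X)$ via $\tau^\ast$ by standard mixed--Hodge--theoretic arguments. Combining the two steps above delivers the claimed isomorphism.

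The main obstacle is precisely this surjectivity of $\cap[\bar X]$ onto $W_{-\ell} H_\ell(\bar X)$ in the singular case. For smooth $\bar X$ it is Poincar\'e duality; in general, the failure is controlled by cohomology of the exceptional divisor of a resolution $\wt{\bar X} \to \bar X$, and ultimately by cohomology of $\Sigma$. The bound $\dim\Sigma \le (n+1)/3$ should arise from dimension--counting in the descent exact sequence relating $\bar X$, $\wt{\bar X}$, $\Sigma$, and the exceptional divisor: it ensures that the obstructing classes are concentrated in cohomological degrees that do not meet the range $2n-\ell$ relevant to the theorem.
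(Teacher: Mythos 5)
Your first half (decomposition of the diagonal, cap--product interpretation of $\Delta_\ast$, dimension count on the $\Delta_j$, vanishing of the $\Gamma$--term after restriction to $X$) matches the paper's argument. But there is a genuine gap in the final step: you assert that the hypothesis $\dim\Sigma\le\frac{n+1}{3}$ makes $\cap[\bar X]\colon \gr^W_{2n-\ell}H^{2n-\ell}(\bar X,\QQ)\to W_{-\ell}H_\ell(\bar X,\QQ)$ surjective in \emph{all} degrees. That is not what the Durfee--Weber results give, and it is not true in general. Lemma \ref{durf} yields surjectivity of $\gr^W_jH^j\to W_{j-2n}H_{2n-j}$ only for $j\ge n+s$ (where $s=\dim\Sigma$), i.e.\ only for $\ell\le n-s$; in the middle range $n-s<j<n+s$ intersection homology genuinely differs from both ordinary cohomology and homology, and no bound on $s$ alone removes the obstruction in those degrees. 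So your argument only proves the theorem for $\ell\le n-s$, and the "obstructing classes avoid the relevant range" heuristic in your last paragraph is the wrong explanation of where $\frac{n+1}{3}$ comes from.

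The missing ingredient is a second, independent argument covering $\ell\ge 2s$: restrict to the smooth locus $U=X\setminus S$, note that $\hbox{Niveau}(A_iU_\QQ)\le 0$, apply the smooth quasi--projective case (Proposition \ref{smoothcase}) to $U$, and then transfer the conclusion back to $X$ using that $W_{-\ell}H_\ell(X,\QQ)\to W_{-\ell}H_\ell(U,\QQ)$ is an isomorphism for $\ell>2s$ (respectively, for $\ell=2i$ with $i\ge s$, compare the localization sequences $A_iS_\QQ\to A_iX_\QQ\to A_iU_\QQ\to 0$ and $W_{-2i}H_{2i}(S)\to W_{-2i}H_{2i}(X)\to W_{-2i}H_{2i}(U)\to 0$, where the map on $S$ is an isomorphism for $i\ge s$). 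The role of the hypothesis $s\le\frac{n+1}{3}$ is purely combinatorial: it is exactly the condition under which the two ranges $[0,n-s]$ and $[2s,2n]$ together cover every $\ell\in[0,2n]$, so that the cap--product argument and the smooth--locus argument between them handle all degrees.
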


This follows from the following more precise version:

\begin{theorem}\label{jannsen} Let $X$ be a quasi--projective variety of dimension $n$, and suppose a compactification of $X$ has singular locus of dimension $\le s$. Suppose
  \[\hbox{Niveau}(A_iX_{\QQ})\le 0\ \ \ \hbox{for\ all\ }i\ .\]
  Then 
    \[ W_{-\ell} H_\ell (X,\QQ)=\begin{cases} 0&\hbox{if\ $\ell$\ is\ odd;}\\
                                             \ima\, cl_i&\hbox{if\ $\ell=2i$,}\\
                                             \end{cases}\]  
    provided $\ell\in [0,n-s]\cup[2s,2n]$.  
      
Moreover,       
  \[ cl^i\colon A^i_cX_{\QQ}\to \gr^W_{2i} H^{2i}_c(X,\QQ)\]
 is injective in the range $i>s$.
  \end{theorem}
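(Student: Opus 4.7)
The plan is to follow the proof of Proposition \ref{smoothcase} with two substantial modifications forced by the singular nature of the compactification $\bar{X}$. Lemma \ref{diag} (applied with $r=0$) still produces a decomposition
\[ \Delta=\Delta_0+\Delta_1+\cdots+\Delta_n+\Gamma\ \in A_n(\bar{X}\times\bar{X})_{\QQ}, \]
with $\Delta_j$ supported on $V_j\times W_j$ of dimensions $(j,n-j)$ and $\Gamma$ supported on $\bar{X}\times D$, so the Bloch--Srinivas input is unchanged. However, a correspondence on a singular variety only acts as $C_\ast\colon H^i(\bar{X},\QQ)\to H_{2n-i}(\bar{X},\QQ)$, and the diagonal $\Delta_\ast$ is merely the canonical cap-with-fundamental-class map rather than an isomorphism. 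The extra ingredient I would need -- this is the role of Lemma \ref{durf}, alluded to in the introduction -- is that under the hypothesis $\dim\hbox{Sing}(\bar{X})\le s$, the canonical map $H^{2n-\ell}(\bar{X},\QQ)\to H_\ell(\bar{X},\QQ)$ surjects onto $W_{-\ell}H_\ell(\bar{X},\QQ)$ precisely for $\ell\in[0,n-s]\cup[2s,2n]$, which is exactly the numerical range appearing in the theorem.

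Granting this, for the first statement, given $a\in W_{-\ell}H_\ell(X,\QQ)$, I would lift to $\bar{a}\in W_{-\ell}H_\ell(\bar{X},\QQ)$ via the localization sequence of $(\bar{X},D)$, use Lemma \ref{durf} to choose $\alpha\in H^{2n-\ell}(\bar{X},\QQ)$ with $\Delta_\ast\alpha=\bar{a}$, and then expand via the diagonal decomposition:
\[ \bar{a}=\sum_{j=0}^{n}(\Delta_j)_\ast\alpha+\Gamma_\ast\alpha. \]
The contribution $\Gamma_\ast\alpha$ factors through $H_\ell(D,\QQ)$ and so restricts to $0$ on $X$. Each $(\Delta_j)_\ast\alpha$ factors through the cohomology of smooth resolutions $\widetilde{V_j}$ and $\widetilde{W_j}$ of dimensions $j$ and $n-j$; the same Hodge-type dimension count as in Proposition \ref{smoothcase} then forces the sum to vanish when $\ell$ is odd, and to lie in $\ima\bigl(H_{2i}(W_{n-i},\QQ)\to H_{2i}(\bar{X},\QQ)\bigr)\subset\ima\, cl_i$ when $\ell=2i$. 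For the second statement, starting from $a\in A^i_c X_{\QQ}$ with $cl^i(a)=0$, I would push forward to $\bar{a}\in A^i\bar{X}_{\QQ}$ satisfying $\bar{a}|_D=0$ and apply the transposed decomposition: $\Gamma_\ast\bar{a}$ vanishes by the boundary restriction, the components $(\Delta_j)_\ast\bar{a}$ for $j\ne i$ vanish by dimension of $\widetilde{V_j}$ and $\widetilde{W_j}$, and the $j=i$ component factors through $A^0_{hom}(\widetilde{W_i})_{\QQ}=0$ once one knows $\bar{a}$ becomes homologically trivial on a resolution. The condition $i>s$ is what should deliver this last step, since in that range the singular locus cannot obstruct the comparison between $A^i\bar{X}_{\QQ}$ and $A_{n-i}\bar{X}_{\QQ}$.

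The main obstacle is proving Lemma \ref{durf}, i.e.\ controlling the image of the canonical map $H^{2n-\ell}(\bar{X},\QQ)\to W_{-\ell}H_\ell(\bar{X},\QQ)$ (and dually the kernel of $A^i\bar{X}_{\QQ}\to A_{n-i}\bar{X}_{\QQ}$) in terms of the dimension of $\hbox{Sing}(\bar{X})$. I expect this to follow from the weight spectral sequence (or cubical hyperresolution) comparing $\bar{X}$ with a smooth resolution $\widetilde{X}$ and its exceptional locus; the numerical ranges $\ell\in[0,n-s]\cup[2s,2n]$ and $i>s$ in the theorem are the output of the dimension bookkeeping in that spectral sequence.
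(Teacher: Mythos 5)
Your treatment of the range $\ell\le n-s$ is essentially the paper's argument: decompose the diagonal via Lemma \ref{diag}, let it act by cap product as $\Delta_\ast\colon H^{2n-\ell}(\bar{X},\QQ)\to H_\ell(\bar{X},\QQ)$, write $\bar{a}=b\cap[\bar{X}]$ using Lemma \ref{durf}, and kill each $(\Delta_j)_\ast b$ by the Hodge--theoretic dimension count on resolutions of $V_j$ and $W_j$ (with $\Gamma_\ast$ dying on restriction to $X$). For Lemma \ref{durf} itself the paper does not run a weight spectral sequence: it quotes Durfee's computation that $IH^j\bar{X}$ equals $\gr^W_jH^j(\bar{X},\QQ)$ for $j\ge n+s$ and $W_{j-2n}H_{2n-j}(\bar{X},\QQ)$ for $j\le n-s$, together with Weber's result that the cap product factors as $\gr^W_jH^j\to IH^j\to W_{j-2n}H_{2n-j}$ with the first arrow injective and the second surjective.

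There is, however, a genuine gap in how you propose to cover the rest of the statement. You assert that the cap--product map surjects onto $W_{-\ell}H_\ell(\bar{X},\QQ)$ \emph{precisely} for $\ell\in[0,n-s]\cup[2s,2n]$, and plan to run the single correspondence argument over that whole range. Intersection homology only gives surjectivity for $j=2n-\ell\ge n+s$, i.e.\ $\ell\le n-s$; the interval $[2s,2n]$ has nothing to do with the Durfee--Weber bounds and is not accessible this way. The paper handles $\ell\ge 2s$ by a different mechanism: apply Proposition \ref{smoothcase} to the smooth locus $U=X\setminus S$ (whose Chow groups inherit niveau $\le 0$), note that $W_{-\ell}H_\ell(X,\QQ)\to W_{-\ell}H_\ell(U,\QQ)$ is an isomorphism for $\ell>2s$, and for $\ell=2i\ge 2s$ compare the localization sequences $A_iS_{\QQ}\to A_iX_{\QQ}\to A_iU_{\QQ}\to 0$ and $W_{-2i}H_{2i}(S)\to W_{-2i}H_{2i}(X)\to W_{-2i}H_{2i}(U)\to 0$, using that $cl_i$ on the at most $s$--dimensional $S$ is an isomorphism for $i\ge s$. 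Without this second argument your proof establishes the theorem only for $\ell\le n-s$. The same issue affects your ``Moreover'' part: running the transposed decomposition directly on the singular $\bar{X}$ requires an action of correspondences on operational Chow cohomology of a singular variety, which you do not justify, and the step where $\bar{a}$ ``becomes homologically trivial on a resolution'' is exactly where it breaks down. The paper instead uses that the natural maps $A^i_cU_{\QQ}\to A^i_cX_{\QQ}$ and $\gr^W_{2i}H^{2i}_c(U,\QQ)\to\gr^W_{2i}H^{2i}_c(X,\QQ)$ are isomorphisms for $i>s$ and invokes the injectivity already proved for the smooth $U$ in Proposition \ref{smoothcase}.
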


\begin{proof} Let $\tau\colon X\to \bar{X}$ denote the given compactification, with boundary
$D=\bar{X}\setminus X$. Applying lemma \ref{diag}, we find a decomposition of the diagonal of $\bar{X}$ of the form
    \[  \Delta=\Delta_0+\Delta_1+\cdots+\Delta_n\ \ \ \in A_n(\bar{X}\times \bar{X})_{\QQ}\ ,\]
   where $\Delta_j$ is supported on $V_j\times W_j$, and $V_j\subset \bar{X}$ has dimension $j$ and $W_j$ has dimension $n-j$. We can view $\Delta$ (and the $\Delta_j$) as a correspondence
   \[ \Delta_\ast\colon   H^i(\bar{X},\QQ)\to W_{i-2n} H_{2n-i}(\bar{X},\QQ)\ ,\]
   where for $\bar{a}\in H^i(\bar{X},\QQ)$, we define
   \[   \Delta_\ast (\bar{a}):=(\pi_2)_\ast \bigl((\pi_1^\ast \bar{a})\cap \Delta\bigr)\in W_{i-2n} H_{2n-i}(\bar{X},\QQ)\]
   (here $\pi_1$ resp. $\pi_2$ denotes projection on the first resp. second factor). It is easily checked that
   \[  \Delta_\ast(\bar{a})=\bar{a}\cap [\bar{X}]\in  H_{2n-i}(\bar{X},\QQ)\ .\]
   (Indeed, let $f\colon\wt{X}\to \bar{X}$ be a resolution of singularities, with projections $\wt{\pi}_1, \wt{\pi}_2$ from $\wt{X}\times\wt{X}$ to the two factors. Let $\wt{\Delta}$ denote the diagonal of $\wt{X}$, so that $\Delta=(f\times f)_\ast \wt{\Delta}$. Then
     \[   \begin{split} \Delta_\ast(\bar{a})&=(\pi_2)_\ast\bigl(\pi_1^\ast \bar{a}\cap\Delta\bigr)\\
                                        &=(\pi_2)_\ast (f\times f)_\ast \bigl( (f\times f)^\ast\pi_1^\ast \bar{a}\cap\wt{\Delta}\bigr)\\
                                        &=f_\ast (\wt{\pi}_2)_\ast \bigl( \wt{\pi}_1^\ast f^\ast \bar{a}\cap \wt{\Delta}\bigr)\\
                                        &=f_\ast \bigl(\wt{\Delta}_\ast(f^\ast \bar{a})\bigr)\\
                                            &=f_\ast(f^\ast \bar{a}\cap[\wt{X}])=\bar{a}\cap [\bar{X}].)\end{split}\]
  
 $\underline{\hbox{Case 1:}\ \ell\le n-s}$. Let $a\in \gr_F^p  W_{-\ell} H_\ell (X,\C)$, and let $\bar{a}\in \gr_F^p W_{-\ell} H_\ell (\bar{X},\C)$ be an element restricting to $a$. According to lemma \ref{durf} below, we can find $b\in \gr^W_{2n-\ell} H^{2n-\ell}(\bar{X},\C)$ such that 
  \[ \bar{a}=b\cap[\bar{X}]\ \ \in W_{-\ell} H_\ell(\bar{X},\C)\ .\]
  The ``Poincar\'e duality'' map from $H^{2i}$ to $H_{2n-i}$, being a map of Hodge structures, is strictly compatible with the Hodge filtration, so we may suppose $b\in \gr_F^{p+n}$.  
  Note that we have
    \[ a=\tau^\ast\bar{a}=\tau^\ast \Bigl( (\Delta_0)_\ast b+\cdots+ (\Delta_n)_\ast b\Bigr)\in \gr_F^p W_{-\ell}H_{\ell}(X,\C)\ .\]
 On the other hand, for dimension reasons we have
  \[  (\Delta_j)_\ast \gr_F^{p+n} \gr H^{2n-\ell}(\bar{X},\C)=0\ \ \ \hbox{unless\ \ }2n-\ell=2j=2(p+n)\ . \]
  It follows that $a=0$ if $\ell\not=-2p$; in particular $W_{-\ell} H_\ell (X,\QQ)=0$ for $\ell$ odd.
  
  Next, let $\ell=2i$ and consider $a\in W_{-2i} H_{2i}(X,\QQ)$. Using lemma \ref{durf}, we can find again $b\in  H^{2n-2i}(\bar{X},\QQ)$ such that
    \[ a=\tau^\ast\bigl( b\cap [\bar{X}]\bigr)=\tau^\ast\bigl( (\Delta_0)_\ast b+\cdots+ (\Delta_n)_\ast b\bigr)\in  H_{2i}(X,\QQ)  \ .\]
    But for reasons of dimension,
    \[  (\Delta_j)_\ast b=0\ \ \hbox{for\ }j\not=n-i\ ,\]
    and clearly
    \[  (\Delta_{n-i})_\ast b\in \ima\, cl_i\ .\]

   $\underline{\hbox{Case 2:} \ \ell\ge 2s}$. Let $S\subset X$ denote the singular locus, and $U=X\setminus S$ the non--singular locus. Then obviously
   \[ \hbox{Niveau}     \bigl( A_i(U)_{\QQ}\bigr)\le 0\ \ \ \ \hbox{for\ all\ $i$\  .}\]
 This implies (by proposition \ref{smoothcase} above) that 
   \[cl_i\colon A_iU_{\QQ}\to W_{-2i} H_{2i}(U,\QQ)\ \]
   is surjective for all $i$, and $W_{-\ell} H_\ell (U,\QQ)=0$ for $\ell$ odd. But the map
   \[  W_{-\ell} H_\ell (X,\QQ)\ \to\ W_{-\ell} H_\ell (U,\QQ)\]
   is an isomorphism for $\ell>2s$, so
   \[ W_{-\ell} H_\ell (X,\QQ)=0\ \ \ \hbox{if\ $\ell>2s$\ is\ odd\ .}\]
   
   Restriction induces a surjection
     \[  W_{-2i} H_{2i}(X,\QQ)\to W_{-2i} H_{2i}(U,\QQ)\]
  for reasons of weight; this fits into a commutative diagram with exact rows
    \[\begin{array}[c]{cccccc}
       A_iS_{\QQ}&\to& A_iX_{\QQ}&\to& A_iU_{\QQ}&\to 0\\
       \downarrow{cl_i}&&\downarrow{cl_i}&&\downarrow{cl_i}&\\
       W_{-2i} H_{2i}(S,\QQ)&\to&W_{-2i} H_{2i}(X,\QQ)&\to& W_{-2i} H_{2i}(U,\QQ)&\to 0\ .
       \end{array}\]
    The right vertical arrow is surjective, as we just noted, and the left vertical arrow is an isomorphism for $i\ge s$.   
    
    The "Moreover" part follows from the commutative diagram
    \[\begin{array}[c]{ccc}
        A^i_cU_{\QQ}&\to& A^i_cX\\
        \downarrow&&\downarrow\\
        \gr^W_{2i} H^{2i}_c(U,\QQ)&\to& \gr^W_{2i} H^{2i}_c(X,\QQ)\\
        \end{array}\]
     The horizontal maps are isomorphisms since $i>s$; the left vertical arrow is injective by proposition \ref{smoothcase}.

\begin{lemma}\label{durf} Let $X$ be a projective variety of dimension $n$, and with singular locus of dimension $\le s$. Then the natural map
  \[  \gr^W_{j} H^j(X,\QQ)\to W_{j-2n} H_{2n-j}(X,\QQ)\]
  is injective for $j\le n-s$, and surjective for $j\ge n+s$.
  \end{lemma}
  
 \begin{proof} Let $IH^jX$ denote middle--perversity intersection homology with rational coefficients. It follows from work of Durfee \cite{D} that
   \[  IH^jX=\begin{cases}   \gr^W_j H^j(X,\QQ), & j\ge n+s;\\
                                           W_{j-2n} H_{2n-j}(X,\QQ), & j\le n-s\ .
                                           \end{cases}\]
  It is well--known \cite{GM}, \cite{GM2} that the ``Poincar\'e duality'' map factors as
    \[  \gr^W_j H^j(X,\QQ) \to IH^jX \to W_{j-2n} H_{2n-j}(X,\QQ)\ .\]
 Moreover, it is known by work of Weber \cite{W} (cf. also \cite{HS}) that the first arrow is injective, and the second arrow surjective.
 \end{proof}   
 
 \end{proof}

 \begin{remark} It seems likely theorem \ref{jannsen} is true without any condition on the singular locus. This is proven by Lewis \cite[Corollary (0.2)]{L2}, under the assumption of (a generalized version of) the generalized Hodge conjecture.
\end{remark}

\begin{remark} Linear varieties (in the sense of \cite{T}) form a subclass of the class of varieties with trivial Chow groups. For a projective (possibly singular) linear 
variety $X$, Totaro has shown \cite{T} that
  \[\begin{split} cl_i\colon A_iX_{\QQ} &\to W_{-2i} H_{2i}(X,\QQ)\ ,\\
                        cl^i\colon A^iX_{\QQ}&\to \gr^W_{2i} H^{2i}(X,\QQ)   
                        \end{split}\]
            are isomorphisms for all $i$. (The first isomorphism is \cite[Theorem 3]{T}; the second isomorphism is obtained by combining the first isomorphism with \cite[Theorem 2]{T}.)
  \end{remark}

The argument in the proof of theorem \ref{jannsen} suggests the following question:

\begin{question} Let $X$ be any quasi--projective variety with 
  $\hbox{Niveau}(A_iX_{\QQ})\le 0$ for all $i$.
 Is it true that the natural map
 \[  \ima\Bigl( A^\ast X_{\QQ}\to A_{\ast} X_{\QQ}\Bigr)\ \stackrel{}{\to}\ \ima\Bigl(  H^\ast (X,\QQ)\to  H_\ast (X,\QQ)\Bigr) \]
 is an isomorphism ?
 \end{question}   

A partial answer is given by the following result: for $X$ projective, the right--hand side is generated by algebraic cycles.

\begin{proposition} Let $X$ be a projective variety of dimension $n$, and suppose 
    $\hbox{Niveau}(A_iX_{\QQ})\le 0\ \forall i$.
%Then $cl^\ast$ and $cl_{\ast}$ induce isomorphisms
 % \[  \ima\Bigl( A^\ast X_{\QQ}\to A_{\ast} X_{\QQ}\Bigr)\ \stackrel{cong}{to}\ \ima\Bigl( \gr H^\ast X\to \gr H_\ast X\Bigr)\ .\]
 Then 
 \[\ima\Bigl(  H^\ast (X,\QQ)\to H_\ast (X,\QQ)\Bigr)\]
 is generated by algebraic cycles. That is,
    \[  \ima\Bigl( H^\ell (X,\QQ)\to  H_{2n-\ell}(X,\QQ)\Bigr)=\begin{cases}  0&\hbox{if\ $\ell$\ is\ odd\ ;}\\
                                                                       \subset \ima\, cl_{n-i}&\hbox{if\ $\ell=2i$\ .}\\
                                                                       \end{cases}\]
                              \end{proposition}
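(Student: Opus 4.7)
The plan is to reuse the diagonal-decomposition machinery from the proof of Theorem~\ref{jannsen}, now applied directly to the projective variety $X$ (so that the boundary term disappears). Lemma~\ref{diag} with $\bar{X}=X$ yields
\[\Delta=\Delta_0+\Delta_1+\cdots+\Delta_n\ \ \in A_n(X\times X)_{\QQ},\]
where $\Delta_j$ is supported on $V_j\times W_j$ with $\dim V_j=j$ and $\dim W_j=n-j$. The computation carried out in the proof of Theorem~\ref{jannsen} shows that, viewed as a correspondence $H^\ell(X,\QQ)\to H_{2n-\ell}(X,\QQ)$, the diagonal acts by $\Delta_\ast b=b\cap[X]$. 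So I am reduced to controlling the image of $\sum_j(\Delta_j)_\ast$.

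Next I would analyse each $(\Delta_j)_\ast$ through its support. Writing $\Delta_j=(i_{V_j}\times i_{W_j})_\ast\Delta_j^\prime$ for some $\Delta_j^\prime\in A_n(V_j\times W_j)_{\QQ}$ and applying the projection formula, the action on $b\in H^\ell(X,\QQ)$ factors through a composite
\[ H^\ell(V_j,\QQ)\ \longrightarrow\ H_{2n-\ell}(W_j,\QQ),\]
obtained by pulling back via $i_{V_j}^\ast$, capping with $\Delta_j^\prime$ on $V_j\times W_j$, and pushing forward to $W_j$; this is then followed by $(i_{W_j})_\ast$ into $H_{2n-\ell}(X,\QQ)$. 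For purely topological dimension reasons, $H^\ell(V_j,\QQ)=0$ when $\ell>2j$, and $H_{2n-\ell}(W_j,\QQ)=0$ when $\ell<2j$. Consequently $(\Delta_j)_\ast b=0$ unless $\ell=2j$.

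For $\ell$ odd every term vanishes, so $b\cap[X]=0$ and the map $H^\ell(X,\QQ)\to H_{2n-\ell}(X,\QQ)$ is identically zero. For $\ell=2i$ only the component $(\Delta_i)_\ast b$ survives, and it lies in the image of $(i_{W_i})_\ast\colon H_{2(n-i)}(W_i,\QQ)\to H_{2(n-i)}(X,\QQ)$. Since $\dim W_i=n-i$, the group $H_{2(n-i)}(W_i,\QQ)$ is generated by the fundamental classes of the top-dimensional irreducible components of $W_i$; these are algebraic cycles of dimension $n-i$, whose pushforwards to $X$ sit inside $\ima\,cl_{n-i}$. This yields the claimed inclusion.

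I do not anticipate a real obstacle: the argument is essentially Case~1 of the proof of Theorem~\ref{jannsen} transcribed to the projective, boundary-free situation. The only step needing a little care is the identity $\Delta_\ast=\cap[X]$, but this has already been verified in the preceding proof via a resolution of singularities of $X$, and the same calculation applies without modification here.
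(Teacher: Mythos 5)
Your argument is correct, and it reaches the conclusion by the same overall architecture as the paper (decompose the diagonal via lemma \ref{diag} with empty boundary, identify $\Delta_\ast$ with $-\cap[X]$, then analyse each $(\Delta_j)_\ast$ through its support $V_j\times W_j$). The one place where you genuinely diverge is the vanishing of $(\Delta_j)_\ast$ for $2j\not=\ell$: you get it from the crude topological bounds $H^\ell(V_j,\QQ)=0$ for $\ell>2\dim V_j$ and $H_{2n-\ell}(W_j,\QQ)=0$ for $2n-\ell>2\dim W_j$, applied directly to the (possibly singular) supports. The paper instead passes to resolutions $\widetilde{V_j},\widetilde{W_j}$, factors the action through $\gr_F^p H^\ell(\widetilde{V_j},\C)$ and $\gr_F^{p-n}H_{2n-\ell}(\widetilde{W_j},\C)$, and kills each Hodge-graded piece separately, which requires strictness of the Hodge filtration to descend from $\gr_F$ to the rational image. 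Your route is more elementary: it needs no resolution of singularities and no Hodge theory for the odd-degree vanishing, only the fact that a complex variety of dimension $d$ has (co)homological dimension $\le 2d$; the paper's Hodge-theoretic lemma is slightly finer (it is a statement about individual $\gr_F$ pieces, which is the form reused elsewhere in the paper), but for this proposition your version suffices. The final step for $\ell=2i$ --- that the image of $(\Delta_i)_\ast$ lands in $(i_{W_i})_\ast H_{2(n-i)}(W_i,\QQ)$, which is spanned by fundamental classes of the top-dimensional components of $W_i$ and hence lies in $\ima\, cl_{n-i}$ --- coincides with the paper's.
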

                              
      \begin{proof} 
   First, let's suppose $\ell$ is odd. Then the vanishing of $\ima\bigl( H^\ell (X,\QQ)\to H_{2n-\ell}(X,\QQ)\bigr)$ follows from the following lemma:
   
   \begin{lemma} Set--up as in the proposition. Then
     \[    \gr_F^{p-n}\Bigl( \ima\bigl( H^\ell(X,\C)\to H_{2n-\ell}(X,\C)\bigr)\Bigr)=0\ \ \hbox{for\ }\ell\not=2p\ .\]
     \end{lemma}
     
     \begin{proof}{(of the lemma)} By strict compatibility of the Hodge filtration, the group in the statement of the lemma is the same as
     \[ \ima\bigl( \gr_F^p H^\ell(X,\C)\to \gr_F^{p-n}H_{2n-\ell}(X,\C)\bigr)\ .\]
     This is the same as
       \[ \Delta_\ast \gr_F^p H^\ell(X,\C)\ ,\]
      where correspondences act as defined in the proof of proposition \ref{jannsen}. Now we apply the decomposition of $\Delta$ given by lemma \ref{diag}.
      The action of the component $\Delta_j$ factors as follows:
     \[ \begin{array}[c]{ccc}
          \cdots&&\\
          \uparrow&&\downarrow\\
          \gr_F^p H^\ell(\widetilde{V_j},\C)&& \gr_F^{p-n} H_{2n-\ell}(\widetilde{W_j},\C)\\
          \uparrow&&\downarrow\\
          \gr_F^p H^\ell (X,\C)&\stackrel{(\Delta_j)_\ast}{\to}& \gr_F^{p-n} H_{2n-\ell}(X,\C)\\
          \end{array}\]
          
     Since $\dim \widetilde{V_j}=j$, the upper left group vanishes for $p>j$; likewise, since $\dim \widetilde{W_j}=n-j$, the upper right group vanishes for $p<j$. It follows that the only non--trivial action is for $p=j$. But the group $\gr^j_F H^\ell(\widetilde{V_j},\C)$ vanishes unless $\ell=2j$.     
  \end{proof}     
  
  It remains to treat the case $\ell=2i$. Let $a\in H^{2i}(X,\QQ)$.
  From the proof of the above lemma, we find that
    \[a\cap [X]=\Delta_\ast a=(\Delta_i)_\ast a\ \ \in H_{2n-2i}(X,\QQ)\]
   (indeed, for $j\not=i$, the action
     \[  (\Delta_j)_\ast H^{2i}(X,\C)=0\ \ \in H_{2n-2i}(X,\C)\ ,\]
     since it is $0$ on each $\gr_F^p$. But
       \[   \begin{array}[c]{ccc}
                            H^{2i}(X,\QQ)&\stackrel{(\Delta_j)_\ast}{\to}  & H_{2n-2i}(X,\QQ)\\
                            \downarrow&&\downarrow\\
                            H^{2i}(X,\C)&\stackrel{(\Delta_j)_\ast}{\to}  & H_{2n-2i}(X,\C)\\    
                            \end{array}\]
                            commutes, so that also $(\Delta_j)_\ast H^{2i}(X,\QQ)=0$ for $j\not=i$.)

                        But
       \[  (\Delta_i)_\ast H^{2i}X\subset \ima\bigl( H_{2n-2i}(W_i,\QQ))\to H_{2n-2i}(X,\QQ)\bigr)\subset \ima\, cl_{n-i}\ .\]
       \end{proof}

 \begin{remark} A result analogous to Jannsen's theorem is proven by Esnault--Levine \cite{EL}. They prove that if $X$ is smooth projective such that
 all cycle class maps into Deligne cohomology are injective, these cycle class maps are surjective (this is reproven, and rendered more precise, in \cite[Theorem 4]{V2}.) 
 Lewis extends this to singular and quasi--projective varieties, again assuming (a generalized version of) the generalized Hodge conjecture \cite[Corollary (0.3)]{L2}.
 It would be interesting to try whether the approach of the present note can be applied to this problem; I haven't looked into this.
 \end{remark} 
 
 \begin{remark} The argument of the proof of theorem \ref{jannsen} can also be used to obtain a new version of Mumford's theorem for singular varieties, plus a verification of the Hodge conjecture for certain singular varieties \cite{moi2}.
 \end{remark}

 %   \[  \Bigl( \ima\bigl( H^{2i}(X,\CC)\to H_{2n-2i}(X,\CC)\bigr)\Bigr)= \gr_F^{n-i}\ .\]
     
  %    It remains to prove surjectivity of
  %    \[  \ima\Bigl( A^i X_{\QQ}\to A_{n-i} X_{\QQ}\Bigr)\ \stackrel{}{to}\ \ima\Bigl(  H^{2i} X\to  H_{2i} X\Bigr)\ .\]
      
  %   Let $f\colon X^\prime\to X$ be any morphism. The Bloch--Srinivas method allows one to obtain a decomposition of the graph of $f$:
  %   \[\Gamma_f=\Gamma_0+\cdots+\Gamma_n\ \ \in A_d(X^\prime\times X)_{\QQ}\ ,\]
 %  with $\Gamma_j$ supported on $V_j\times W_j$, where $V_j\subset X^\prime$ is of codimension $j$ and $W_j\subset X$ of dimension $j$.   

%\subsection{Subsection title}
%\label{sec:2}

\begin{acknowledgements}
%If you'd like to thank anyone, place your comments here
%and remove the percent signs.
This note was stimulated by the Strasbourg ``groupe de travail'' based on the monograph \cite{Vo}. It is a pleasure to thank the participants of this groupe de travail for a very pleasant atmosphere and stimulating interactions.
\end{acknowledgements}

% BibTeX users please use one of
%\bibliographystyle{spbasic}      % basic style, author-year citations
%\bibliographystyle{spmpsci}      % mathematics and physical sciences
%\bibliographystyle{spphys}       % APS-like style for physics
%\bibliography{}   % name your BibTeX data base

% Non-BibTeX users please use

\end{document}